\theoremstyle{plain}
\newtheorem{theorem}{Theorem}[section]
\newtheorem{lemma}[theorem]{Lemma}
\newtheorem{observation}[theorem]{Observation}
\newtheorem{corollary}[theorem]{Corollary}
\newtheorem{prop}[theorem]{Proposition}
\newtheorem{fact}{Fact}
\theoremstyle{definition}
\newtheorem{example}[theorem]{Example}
\newcommand{\F}{\mathbb{F}}
\def\w{\omega}
\def\FF{\mathbb{F}}
\newcommand{\Z}{\mathbb{Z}}
\def\d{\delta}
\def\s{\sigma}
\DeclareMathOperator{\Crk}{Crk}
\DeclareMathOperator{\wCrk}{wCrk}
\DeclareMathOperator{\PGL}{PGL}
\def\unfrac#1#2{#1/#2}
\def\upnfrac#1#2{(#1)/#2}
\def\twocycle#1,#2{(#1 \, #2)}
\def\threecycle#1,#2,#3{(#1 \, #2\, #3)}
\def\fourcycle#1,#2,#3,#4{(#1 \, #2\, #3 \,#4)}
\def\fivecycle#1,#2,#3,#4,#5{(#1 \, #2\, #3 \,#4 \,#5)}
\title{Permutation polynomials: iteration of shift and inversion maps over finite fields}
\author{Anna Chlopecki}
\email{annanc2@illinois.edu}
\address{Department of Mathematics\\
University of Illinois at Urbana-Champaign\\
Champaign, Illinois\\
USA}
\author{Juliano Levier-Gomes}
\email{jl0128@westminstercollege.edu}
\address{Department of Mathematics\\
Westminster College\\
Salt Lake City, UT\\
USA}
\author{Wayne Peng} 
\email{junwen.wayne.peng@gmail.com}
\address{Department of Mathematics\\
University of Rochester\\
Rochester, NY\\
USA}
\author{Alex Shearer}
\email{sheareaj@plu.edu}
\address{Pacific Lutheran University\\
Tacoma, WA\\
USA}
\author{Adam Towsley}
\email{adtsma@rit.edu}
\address{School of Mathematical Sciences\\
Rochester Institute of Technology\\
Rochester, NY\\
USA}
\keywords{weak Carlitz rank, permutation polynomials over finite fields, randomness in permutation trees}
\begin{document}
\begin{abstract}
    We show that all permutations in $S_n$ can be generated by affine
    unicritical polynomials. We use the $\PGL$ group structure to
    compute the cycle structure of permutations with low Carlitz rank.
    The tree structure of the group generated by shift and inversion
    maps is used to study the randomness properties of permutation
    polynomials.
\end{abstract}

\maketitle

\section{Introduction}

Throughout this paper, let $p$ be an odd prime and $1\leq d< p{-}1$
be an integer coprime to $p{-}1$. A polynomial $f: \F_p \rightarrow
\F_p$ is a \emph{permutation polynomial} if $f$ is bijective, i.e.,
every element in the image will only have one preimage. A permutation
polynomial $f$ represents an element $\Sigma$ in the symmetric group
$S_p$ on $p$ letters, but in general $f$ is not unique in representing
$\Sigma$. For any permutation $\Sigma\in S_p$ of a finite field $F_p$,
we can always find a unique permutation polynomial $f$ with $\deg(f)<p$
to represent $\Sigma$. We then say that the 
\emph{correspondent permutation
polynomial} of $\Sigma$ is~$f$, and say that $f$ represents a permutation
$\Sigma$ if we have no restriction on the degree of $f$.

Given a monic polynomial $f_{d,c}(x) = x^d + c$ over a finite field
$\F_p$, the polynomial is a permutation polynomial when $f_{d,c}$
is a bijection of $\FF_p$. The map $f_{d,c}:\FF_p \to \FF_p$ is a
bijection when $\gcd(d,p-\nobreak 1) =\nobreak 1$. 
In this case $\FF_p$ has no $d$-th
root of unity and, thus, $f_{d,c}$ is injective on a finite set. In
this paper, the authors study the structure of the group generated by
the above monic permutation polynomials; it turns out that the group
can be generated by only two polynomials, the shifting map $\s=x+1$
and the inversion map $\d=x^{p-2}$. We call $x^{p-2}$ the inversion
map
because $x^{p-1}\equiv x^{p-2}\cdot x\equiv 1\bmod p$ for all $x\neq 0$
by Fermat's little theorem. Our main result is
the following (see Theorem~\ref{main_theorem}):

\begin{theorem}\label{generator_theorem}
Let $f_{d,c} = x^d + c$ over $\F_p$. Then
\[
\langle \s,\d\rangle\cong \langle f_{d,c} \rangle \cong
\begin{cases}
    S_p & p \equiv 1 \bmod 4,\\
    A_p & p \equiv 3 \bmod 4,
\end{cases}
\]
where $S_p$ is the symmetric group on $p$ letters, and $A_p$ is the
alternating group on $p$ letters.
\end{theorem}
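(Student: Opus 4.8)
The plan is to deduce the single-polynomial statement from the pair $\langle\sigma,\delta\rangle$, determine this latter group abstractly as $S_p$ or $A_p$ by a primitivity/Jordan argument, and then decide which one occurs by a parity computation. First I would record the two generators as permutations of $\mathbb{F}_p$: $\sigma=x+1$ is the $p$-cycle $(0\,1\,\cdots\,p{-}1)$, while $\delta=x^{p-2}$ fixes $0$ and sends each nonzero $x$ to $x^{-1}$, so it fixes exactly $\{0,1,-1\}$ and is a product of $(p-3)/2$ transpositions. For the isomorphism $\langle\sigma,\delta\rangle\cong\langle f_{d,c}\rangle$ (with $\langle f_{d,c}\rangle$ the group generated by the maps $f_{d,c}$ as $c$ ranges over $\mathbb{F}_p$), I would note $f_{d,c}=\sigma^{c}\circ f_{d,0}$, so $\sigma=f_{d,1}\circ f_{d,0}^{-1}$ already lies in that group and hence $\langle f_{d,c}\rangle=\langle\sigma,x^{d}\rangle$. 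Running the determination below with the power map $x^{d}$ (for $d\ge 2$) in place of $\delta$ shows $\langle\sigma,x^{d}\rangle$ is the same $S_p$ or $A_p$ for every admissible $d$, which yields the asserted isomorphism.

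For the determination, set $G=\langle\sigma,\delta\rangle\le S_p$. Since $\sigma$ is a $p$-cycle, $G$ is transitive, and a transitive group of prime degree is automatically primitive (blocks have size dividing $p$). By Burnside's theorem a transitive group of prime degree is either $2$-transitive or is contained in the natural one-dimensional affine group $\mathrm{AGL}_1(p)=\{x\mapsto ax+b\}$; the latter is impossible for $G$ because inversion fixes the two distinct points $1,-1$ without being the identity, so it is not an affine map (an affine map with two fixed points is trivial). Hence $G$ is $2$-transitive, and in particular primitive.

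The heart of the proof is to exhibit in $G$ a single cycle of prime length $q\le p-3$, that is, a $q$-cycle fixing at least three points; Jordan's theorem then forces $A_p\le G$. This is exactly where the $\mathrm{PGL}_2(\mathbb{F}_p)$ structure enters: identifying $\sigma\leftrightarrow\left(\begin{smallmatrix}1&1\\0&1\end{smallmatrix}\right)$ and $\delta\leftrightarrow\left(\begin{smallmatrix}0&1\\1&0\end{smallmatrix}\right)$, every word in $\sigma,\delta$ agrees with the corresponding M\"obius transformation of $\mathbb{P}^1(\mathbb{F}_p)$ except along the trajectories that the M\"obius map routes through $\infty$. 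Because $\delta$ differs from genuine inversion at \emph{both} poles $0$ and $\infty$ (it fixes them rather than swapping them), a word whose M\"obius image has a controlled orbit through $\infty$ realizes, on $\mathbb{F}_p$, a permutation whose cycle type is governed by that orbit; arranging for this to be a single prime cycle off a set of at least three fixed points is the key computation, carried out by the paper's cycle-structure analysis of low-Carlitz-rank maps. I expect this bookkeeping, namely tracking how the two-point discrepancy of $\delta$ at $0$ and $\infty$ deforms the M\"obius cycle structure, to be the main obstacle; the primitivity and parity steps are routine by comparison.

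Finally I would settle $S_p$ versus $A_p$ by signs. The $p$-cycle $\sigma$ is even (as $p$ is odd), while $\mathrm{sgn}(\delta)=(-1)^{(p-3)/2}$, which equals $-1$ when $p\equiv 1\pmod 4$ and $+1$ when $p\equiv 3\pmod 4$. Thus for $p\equiv 3\pmod 4$ both generators are even, so $G\le A_p$, and with $A_p\le G$ this gives $G=A_p$; for $p\equiv 1\pmod 4$ the odd element $\delta$ shows $G\not\le A_p$, whence $G=S_p$. For the general power map one replaces this by a Zolotarev-type evaluation of $\mathrm{sgn}(x^{d})$ on $\mathbb{F}_p^{\times}$, which yields the same dichotomy and so the same group for every admissible $d$.
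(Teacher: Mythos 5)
Your skeleton (transitivity from the $p$-cycle, primitivity for free in prime degree, an element of small support to force $A_p\le G$, then a parity count to separate $S_p$ from $A_p$) runs parallel to the paper's, and your parity computations agree with Observation 3.5 and Lemma 3.6 there. But the step you yourself flag as ``the heart of the proof'' --- producing, as an explicit word in $\sigma$ and $\delta$, a permutation of small support to which a Jordan-type theorem applies --- is exactly the step you do not carry out, and it is the genuine content of the argument. The paper spends Lemma \ref{two_cycle_formula} constructing the word $\sigma^{3}\delta\,\sigma^{-1}\delta\,(\sigma\delta)^{3}\,\sigma^{-1}\delta$ and verifying pointwise (via the ``$(1+cp)/n$'' bookkeeping for the inverse map) that it equals $\twocycle0,1\twocycle2,3$ for every $p\ge 5$; nothing in your proposal substitutes for this. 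Note also a mismatch in the tool you cite: Jordan's theorem as you state it requires a single cycle of prime length $q\le p-3$, whereas the element that the Möbius/$\PGL_2$ bookkeeping naturally yields (and that the paper actually produces) is the double transposition $\twocycle0,1\twocycle2,3$, which is not such a cycle. The paper instead invokes the Iradmusa--Taleb result that a full $p$-cycle together with a permutation of type $(a\,b)(c\,d)$ generates $A_p$. So either you must construct a genuinely different element (a prime cycle fixing at least three points) or you must swap in a different generation criterion; as written, the key element is missing and the quoted theorem would not accept the element that is available.

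Two smaller gaps: first, the assertion that ``running the determination below with the power map $x^{d}$ in place of $\delta$'' gives the same group is not justified --- your determination leans on $\delta$ being a M\"obius transformation off its poles, which $x^{d}$ for $d\ge 3$ is not, so the identification $\langle f_{d,c}\rangle=\langle\sigma,\delta\rangle$ needs a separate argument (the paper gets the containment $\langle f_{d,c}\rangle\le A_p$ for $p\equiv 3\bmod 4$ from Lemma \ref{even}, pairing each cycle $(a_1\cdots a_k)$ of $x\mapsto x^{d}$ with the cycle $(-a_1\cdots -a_k)$, using that $-1$ is a nonresidue). Second, the Zolotarev-type sign evaluation for general $x^{d}$ is asserted rather than performed; since $p-1$ is even, the standard Zolotarev formula does not apply verbatim and the parity of $x\mapsto x^{d}$ on a cyclic group of even order requires its own (short) computation. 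The Burnside/2-transitivity step, by contrast, is harmless but superfluous: primitivity already follows from transitivity in prime degree, which is all Jordan's theorem needs.
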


As a consequence (see Corollary~\ref{Carlitz}) of the theorem, we
revisit a well-known result of Carlitz~\cite{carlitz}, where he shows
that the group generated by affine maps $ax+b$ and the
inverse map $x^{p-2}$ for $a,b\in \FF_p$ is $S_p$.

Since, for this paper, the inversion and the shifting maps 
serve as the only two generators, 
we are able to analyze and
systematically compute permutation polynomials with a
tree, which is almost isomorphic to a $(p{-}1)$-ary tree; 
see Figure~\ref{tree}. We are able to find a general form in terms of $\s$
and $\d$ for the permutation polynomial that represents the permutation
$\twocycle0,1\twocycle2,3$ for any prime $p$.
\looseness=-1

This result is slightly stronger than the ones given
in \cite{Aksoy-Esen-Csmelioglu-Meidl-Topuzoglu-2009} and
\cite{cesmelioglu-ayca-meidl-topuzoglu-2014}. They observed that any
permutation over $\F_p$ can be rewritten as the polynomial of the form
\begin{equation}\label{eq:1}
P_n(x) =
(\cdots((a_0x+a_1)^{p-2}+a_2)^{p-2}\cdots+a_n)^{p-2}+a_{n+1},\quad\text{for
some } n \geq 0.
\end{equation}
In regards to our work, any permutation over $\F_p$ can be represented
by the polynomial $P_n(x)$ with $a_0 = 1$ or $\pm 1$ for $p\equiv 1$
or $3\bmod 4$ respectively. We let $Q_n(x)$ be the polynomial in the
form \eqref{eq:1} with $a_0=1$, we refer to $Q_n$ as the weak form
of index $n$, and $P_n$ is a standard form of index $n$.

Note that the \emph{Carlitz rank} of a permutation $\Sigma$
or its correspondent permutation polynomial $f$, denoted
as $\Crk(\Sigma)$ and $\Crk(f)$ respectively, defined in
\cite{Aksoy-Esen-Csmelioglu-Meidl-Topuzoglu-2009}, is the minimal
number $n$ of inversions required in the form \eqref{eq:1} to
represent $\Sigma$. We define the \emph{weak Carlitz rank}
of a permutation $\Sigma$ or its correspondent permutation
polynomial $f$, denoted as $\wCrk(\Sigma)$ and $\wCrk(f)$
respectively, as the minimal $n$ such that there is a permutation
polynomial $Q_n$ representing $\Sigma$. An obvious relation
between these two ranks is 
$$\Crk(\Sigma)\leq \wCrk(\Sigma)\quad \text{for any }
\Sigma\in S_p.$$
The key lemma of this paper implies
$$
\Crk(\twocycle1,2\twocycle3,4)\leq \wCrk(\twocycle1,2\twocycle3,4)\leq 6
\quad \text{for all }p.$$ 
See~\cite{A-N-O-A-P-V-Q-L-S-A-T-A-2018}
and \cite{Topuzoglu-Alev-2014} for a survey about the recent
development on the Carlitz rank. In Section 4, we will provide an
algebraic geometry point of view on low Carlitz rank and use it to
show that certain permutations cannot appear when the Carlitz rank is low.

Permutation polynomials are an active area of research due to their
valuable applications across the fields of cryptography, engineering,
coding theory, and other fields of math. These polynomials are valuable
because they share properties with genuinely random mappings, while
in special cases also having predictable underlying structures. One
notable result is Polard's rho algorithm for factoring large numbers,
which operates on points of collision using psuedorandom 
functions~\cite{pollard}. Due to quadratic permutation polynomials having a
number of periodic points equal to the expected amount for a random
function, they are a clear choice for use in the rho algorithm
\cite{flynn-garton}.
In cryptography, permutation polynomials have been used to generate
balanced binary words \cite{Laigle-Chapuy}; they also play a key role
in the RC6 encryption algorithm \cite{singh_maity}.

Psuedorandom permutations have found applications in affecting
the efficiency of turbo codes. Though random permutations seem
to be standard practice; more research into semirandom and
nonrandom permutations may produce wanted results in the future
\cite{dolinar_divsalar}.

An interesting property of permutation polynomials is that they do not behave in a manner that is truly random. As shown in \cite{goncarov}, the number of cycles of a given length of a permutation polynomial is bounded. Our result also provide an example of a specific compositions of functions that corresponds to the same element of $S_p$, for all  $p$ greater than a given value.
Hence,
a natural question to ask is then what other nonrandom behaviors
can be induced on families of permutation polynomials. See
\hyperlink{appa}{the Appendix} for an example of this.

\begin{example}
    Let $p = 5$, then the valid choices for $d$ are $1$ and $3$.
        \begin{gather*}
        \begin{array}{c|c|c|c|c|c}
            & x & x+1 & x+2 & x+3 & x+4\\\hline
            f(0) & 0 & 1 & 2 & 3 & 4\\
            f(1) & 1 & 2 & 3 & 4 & 0\\
            f(2) & 2 & 3 & 4 & 0 & 1\\
            f(3) & 3 & 4 & 0 & 1 & 2\\
            f(4) & 4 & 0 & 1 & 2 & 3\\
            \sigma & \text{id} & \fivecycle0,1,2,3,4 &
            \fivecycle0,2,4,1,3 & \fivecycle0,3,1,4,2 &
            \fivecycle0,4,3,2,1
        \end{array}
        \\[10pt]
        \begin{array}{c|c|c|c|c|c}
            & x^3 & x^3+1 & x^3+2 & x^3+3 & x^3+4\\\hline
            f(0) & 0 & 1 & 2 & 3 & 4\\
            f(1) & 1 & 2 & 3 & 4 & 0\\
            f(2) & 3 & 4 & 0 & 1 & 2\\
            f(3) & 2 & 3 & 4 & 0 & 1\\
            f(4) & 4 & 0 & 1 & 2 & 3\\
            \sigma & \twocycle2,3 & \fourcycle0,1,2,4 &
            \twocycle0,2\threecycle1,3,4 & \twocycle0,3\threecycle1,4,2
            & \fourcycle0,4,3,1
        \end{array}
        \end{gather*}
\end{example}

\section{Some Permutation Polynomial Results}
\begin{observation}\label{shift}
The function $x+1$ over $\F_{p}$ corresponds to the even permutation
$$(0\; 1\; \cdots\; p-1),$$
where $p$ is a prime greater than $2$.
\end{observation}


\begin{observation}\label{shifting}
    The permutation associated to $f_{d,c}(x)$ over $\F_{p}$ is
    equal to $\fourcycle0,1,{\cdots},{p{-}1}^{c-i}\sigma_i$ for any
    $\sigma_i$, where $\sigma_{i}$ is a permutation corresponding
    to $f_{d,i}(x)$.
\end{observation}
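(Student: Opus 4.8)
The plan is to exploit the additive structure of the constant term. The starting point is the identity
\[
f_{d,c}(x) = x^d + c = (x^d + i) + (c-i) = f_{d,i}(x) + (c-i),
\]
valid for every $x \in \F_p$. Reading this as a statement about functions, the permutation $\sigma_c$ represented by $f_{d,c}$ is obtained from the permutation $\sigma_i$ represented by $f_{d,i}$ by post-composing with the map that adds the constant $c-i$. So the whole statement reduces to identifying the permutation of the ``add a constant'' map and then tracking the composition correctly.

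For the first part I would invoke Observation~\ref{shift}: the shift map $\s = x+1$ corresponds to the full cycle $\fourcycle0,1,{\cdots},{p{-}1}$. Since adding $k$ is the $k$-fold iterate of adding $1$, and composing affine shifts multiplies the corresponding permutations, the map $x \mapsto x+k$ corresponds to $\s^{k}$, where the exponent is read modulo $p$ (consistent with $\s^{p} = \mathrm{id}$). In particular $x \mapsto x + (c-i)$ corresponds to $\s^{c-i}$. This step is a one-line induction and carries no real content.

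Putting the pieces together, and using the convention that permutations act on the left so that a product $\alpha\beta$ means ``first $\beta$, then $\alpha$,'' I would evaluate $\s^{c-i}\sigma_i$ at an arbitrary $x$:
\[
\bigl(\s^{c-i}\sigma_i\bigr)(x) = \s^{c-i}\bigl(\sigma_i(x)\bigr) = \bigl(x^d + i\bigr) + (c-i) = x^d + c = \sigma_c(x).
\]
Since this holds for all $x \in \F_p$, we conclude $\sigma_c = \s^{c-i}\sigma_i$, which is exactly the claimed identity $\fourcycle0,1,{\cdots},{p{-}1}^{c-i}\sigma_i$.

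The only place demanding care---hardly a genuine obstacle---is bookkeeping the composition order: one must fix whether the shift is applied before or after $f_{d,i}$ and make the permutation-multiplication convention match, since an inconsistent choice would flip the exponent to $i-c$ or reverse the two factors. Once the convention is pinned down, the result is immediate from the additive identity above. I note that this identity is purely algebraic; the hypothesis $\gcd(d,p-1)=1$ enters only to guarantee that the various $f_{d,j}$ genuinely represent permutations, so that the symbols $\sigma_j$ are well defined.
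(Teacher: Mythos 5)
Your proposal is correct and follows essentially the same route as the paper's proof: both decompose $f_{d,c}$ as $f_{1,c-i}\circ f_{d,i}$ and invoke Observation~\ref{shift} to identify the shift with the power $\fourcycle0,1,{\cdots},{p{-}1}^{c-i}$. Your added attention to the composition-order convention is a small improvement in rigor but not a different argument.
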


\begin{proof}
Consider the function $f_{d,c} =  x^d+c = (x^d+i)+c-i$. Viewed as
a composition of functions, this can be written as $ f_{1,c-i} \
\circ \ f_{d,i} $. We have already established that  $f_{d,i}$ is
equivalent to a permutation, which we will refer to as $\sigma_i$, and
from Observation \ref{shift}, we know that $f_{1,c-i}$ corresponds to
$c-i$ iterations of $\fourcycle0,1,{\cdots},{p{-}1}$. By translating
$ f_{1,c-i}  \circ  f_{d,i} $ to cycle notation, we arrive at
$\fourcycle0,1,{\cdots},{p-1}^{c-i}\sigma_i$.
\end{proof}

\begin{prop}\label{num_cosets}
    The number of left (or right) cosets of the cyclic group $\langle
    \fourcycle0,1,{\cdots},{p-1}\rangle$ in the group $\langle
    f_{d,c}\rangle$ over $\F_p$ is $\phi(p-1)$, where $\phi$ is
    Euler's totient function.
\end{prop}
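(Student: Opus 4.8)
The plan is to read off from Observation~\ref{shifting} that the permutation polynomials $f_{d,c}$ fall into cosets of the shift group, and then to count those cosets by the admissible exponents $d$. Set $\tau = \fourcycle0,1,{\cdots},{p-1}$, the $p$-cycle associated to $\s = x+1$ by Observation~\ref{shift}, so that $\langle\tau\rangle$ is the cyclic group in the statement, of order $p$. Taking $i = 0$ in Observation~\ref{shifting}, the permutation $\sigma_{d,c}$ represented by $f_{d,c}$ equals $\tau^{c}\sigma_{d,0}$, where $\sigma_{d,0}$ is the permutation represented by the power map $x^d$. As $c$ ranges over $\F_p$ the factor $\tau^{c}$ ranges over all of $\langle\tau\rangle$, so for each fixed admissible $d$ the family $\{\sigma_{d,c} : c \in \F_p\}$ is precisely the coset $\langle\tau\rangle\,\sigma_{d,0}$. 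Hence every monic unicritical permutation polynomial lies in one of the cosets $\langle\tau\rangle\,\sigma_{d,0}$, one for each admissible $d$.

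The heart of the argument is to check that distinct admissible exponents give distinct cosets. I would first note that distinct admissible $d$ yield distinct power maps: $x \mapsto x^d$ on $\F_p^{\times}$ depends only on $d \bmod (p-1)$, and since $\F_p^{\times}$ is cyclic of order $p-1$ the residues $d$ with $1 \le d < p-1$ and $\gcd(d,p-1) = 1$ produce pairwise distinct permutations $\sigma_{d,0}$, each of which fixes $0$. I would then show that $\langle\tau\rangle\,\sigma_{d,0} = \langle\tau\rangle\,\sigma_{d',0}$ forces $d = d'$: this equality is equivalent to $\sigma_{d,0}\,\sigma_{d',0}^{-1} \in \langle\tau\rangle$, but $\sigma_{d,0}$ and $\sigma_{d',0}$ both fix $0$, so their product fixes $0$, whereas every nontrivial power of the $p$-cycle $\tau$ is itself a $p$-cycle and hence fixed-point-free. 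Therefore $\sigma_{d,0}\,\sigma_{d',0}^{-1}$ is the identity, giving $\sigma_{d,0} = \sigma_{d',0}$ and thus $d = d'$.

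Combining the two paragraphs, the monic unicritical permutation polynomials are partitioned into exactly one coset of $\langle\tau\rangle$ per admissible exponent, and the number of admissible exponents is $\phi(p-1)$ by the definition of Euler's totient function; this gives the stated count, with the left-coset version following from the analogous decomposition. I expect the main obstacle to be precisely the distinctness step: the entire count hinges on the contrast between the power maps, which all fix the origin, and the nontrivial shifts, which are fixed-point-free, so the argument must be phrased so that no two admissible exponents can be identified through different additive constants $c$. Once the coset-equality criterion is pinned down this cannot happen, since distinct cosets are disjoint, and the conclusion is immediate.
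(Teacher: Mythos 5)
Your proof is correct, and while it shares the paper's overall skeleton --- index the candidate cosets by the admissible exponents $d$, observe via Observation~\ref{shifting} that varying $c$ sweeps out a single coset of the shift group, and then show distinct $d$ give distinct cosets --- the crucial distinctness step is argued by a genuinely different mechanism. The paper reduces coset equality to a functional identity $x^{d_i}+c=x^{d_j}$ holding for all $x\in\F_p$ and rules it out by root counting: the difference is a polynomial of degree $\max(d_i,d_j)<p$, which cannot vanish at all $p$ points unless it is identically zero, forcing $d_i=d_j$ and $c=0$. You instead split the step into two structural observations: first, every nontrivial power of the $p$-cycle $(0\,1\,\cdots\,p-1)$ is again a $p$-cycle (since $p$ is prime) and hence fixed-point-free, whereas $\sigma_{d,0}\sigma_{d',0}^{-1}$ fixes $0$, so coset equality already forces $\sigma_{d,0}=\sigma_{d',0}$; second, evaluating at a primitive root of $\F_p^{\times}$ shows that distinct admissible exponents in $[1,p-1)$ give distinct power maps. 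Your route buys a cleaner conceptual explanation of why the count works --- nontrivial shifts move every point while power maps fix the origin --- and avoids polynomial root counting entirely; the paper's route is a single degree-bound argument that handles $c$ and $d$ simultaneously (note its congruence should read modulo $p$, not $p-1$). Both arguments are complete and yield the stated count of $\phi(p-1)$.
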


\begin{proof}
    Let's consider functions $f_{d,c}(x)$ over a finite field
    $\F_p$. Let $\tau_p$ be the set of cosets yielded. We know that
    the $\gcd(d,p-1)=1$, thus $|\tau_p|$ is at most $\phi(p-1)$. We
    want to show that $|\tau_p| = \phi(p-1)$. This is equivalent
    to showing that $[\sigma_{i}] \neq [\sigma_{j}]$ for $d_{i}
    \not\equiv d_{j} \bmod (p-1)$.

    Assume that $d_i \neq d_j$ and, for a contradiction, assume that
    $[\sigma_{i}] = [\sigma_{j}]$. Then, $\sigma_{i} \in
    [\sigma_{j}]$. So, there exists $c$ such that $x^{d_i} + c \equiv
    x^{d_j} \bmod (p-1)$ for all $x \in \F_{p}$.
    Then, for every $x \in \FF_p$ it follows that $x^{d_i}
    + c - x^{d_j} \equiv 0 \bmod (p-1)$. We can only have
    $\max(d_{i},d_{j})$ roots, unless $d_i = d_j$ and $c = 0$. If our
    assumption is true, $x^{d_i}+c-x^{d_j}$ must have $p$ distinct
    roots in $\F_{p}$. However, $\max(d_{i},d_{j}) < p$.
    Thus we have a contradiction.
\end{proof}

For a fixed $d$, as a consequence of Observation \ref{shifting},
there are $p$ distinct permutations given by a function $f_{d,c}(x)$
over $\F_{p}$. So, using Proposition \ref{num_cosets}, it is clear
that the total number of permutations yielded by $f_{d,c}(x)$ for
all $d$ is $p\cdot\phi(p-1)$.

\section{Generating $A_p$ or $S_p$ from Permutation Polynomials}

Our overall goal for this section is to observe which groups are
generated by the set of permutations corresponding to $f_{d,c}(x)$
over $\mathbb{F}_p$. For this investigation, we will focus on the
functions $x+1 \pmod{p}$ and $x^{p-2} \pmod{p}$, as both maps behave
in a predictable manner across any choice of $p$. By Observation
\ref{shift}, the function $x + 1$ corresponds to the cycle $(0 \,
1 \, \cdots \, p {-} 1)$.
Also, we notice that $x^{p-2}$ can be simplified via Fermat's
little theorem to $x^{-1}$. This shows that $x^{p-2}$ is its own
inverse. Therefore, it is a permutation of order 2 and must be composed
of disjoint two cycles. From these two properties, our intent is to
show that the functions $x + 1$ and $x^{p-2}$ can act as a minimal
generating set for  $\langle f_{d,c} \rangle$.

It is known from a result of Iradmusa and Taleb \cite{irad_taleb}
that a permutation of the form
$(a \ b)(c \ d)$ and a full cycle, that is a cycle of length $p$,
are sufficient to generate all of the group $A_p$.%
\footnote[1]{This
statement is conditionally true, however all these conditions are
satisfied when $p$ is prime.} 
We have already established that
$x+1$ is a full cycle. We will now show that $(0 \ 1)(2 \ 3)$ can be
generated as a composition of $x+1$ and $x^{p-2}$ for all $p \geq 3$. First, we have the following observation:

\begin{observation}\label{only_three}
In $\FF_p$, for $p \geq 3$, the function $x^{p-2}$ only has three
fixed points, $0$, $1$, and~${-}1$.
\end{observation}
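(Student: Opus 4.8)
The statement asks me to show that the map $x \mapsto x^{p-2}$ over $\F_p$ (for $p \geq 3$) has exactly three fixed points: $0$, $1$, and $-1$. My plan is to directly solve the fixed-point equation. First I would dispose of $x = 0$ as a special case: since $0^{p-2} = 0$ whenever $p - 2 \geq 1$, i.e. for all $p \geq 3$, the element $0$ is a fixed point. For the remaining points, I would work in $\F_p^\times$, where Fermat's little theorem (already invoked in the excerpt to justify calling $x^{p-2}$ the inversion map) gives $x^{p-2} = x^{-1}$ for all $x \neq 0$. Thus over the nonzero elements the fixed-point condition $x^{p-2} = x$ becomes $x^{-1} = x$, equivalently $x^2 = 1$.

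The core of the argument is then to count the solutions of $x^2 = 1$ in the field $\F_p$. I would rewrite this as $x^2 - 1 = (x-1)(x+1) = 0$, and since $\F_p$ is an integral domain, the only solutions are $x = 1$ and $x = -1$. These are distinct precisely because $p \neq 2$, so $1 \not\equiv -1 \bmod p$; this is where the hypothesis $p \geq 3$ (equivalently $p$ odd, as assumed throughout the paper) is used. Combining the nonzero solutions $\{1, -1\}$ with the point $0$ from the first case yields exactly the three claimed fixed points.

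I do not anticipate a serious obstacle here, since the whole argument reduces to factoring a quadratic over a field. The only point requiring a moment of care is the treatment of $x = 0$: Fermat's little theorem does not apply at $0$, so I must verify the fixed-point condition there separately by direct evaluation of $0^{p-2}$, rather than silently using the identity $x^{p-2} = x^{-1}$. Once that case is handled explicitly, the remaining computation over $\F_p^\times$ is routine, and the distinctness of $1$ and $-1$ follows immediately from $p$ being odd.
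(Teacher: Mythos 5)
Your proof is correct and complete; the paper actually states this observation without any proof, so you are supplying the argument the authors omit. Your route --- handling $x=0$ separately, then using Fermat's little theorem to reduce the fixed-point equation on $\F_p^{\times}$ to $x^2=1$ and factoring over the field --- is exactly the intended one (the introduction already identifies $x^{p-2}$ with inversion via Fermat), and your care in noting that $1\neq -1$ requires $p$ odd is the only nontrivial point.
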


In the following lemma, we will establish a weak form for the
$\twocycle0,1\twocycle2,3$.

\begin{lemma}\label{two_cycle_formula}
    Let $\delta$ and $\sigma$ be the permutations corresponding to
    $x^{p-2}$ and $x + 1$, respectively. Then, for $p \geq 5$,
    \[
        \twocycle0,1\twocycle2,3 =
        \sigma^{3}\delta\cdot\sigma^{-1}\delta\cdot\left(\sigma\delta\right)^{3}\cdot\sigma^{-1}\delta.
    \]
    In particular, we have $\Crk((01)(23))= \wCrk((01)(23))= 6$.
\end{lemma}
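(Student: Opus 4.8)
The plan is to exploit the fact that $\sigma$ and $\delta$ are, up to a single point, Möbius transformations of $\mathbb{P}^1(\F_p)$. Indeed $\sigma$ is the fractional linear map $z\mapsto z+1$, while $\delta=x^{p-2}$ agrees with the Möbius inversion $\iota\colon z\mapsto 1/z$ at every $x\in\F_p^{\times}$ and differs from it only at the single input $0$, where $\delta(0)=0$ but $\iota(0)=\infty$. Writing the claimed word as $W=\sigma^{3}\delta\,\sigma^{-1}\delta\,(\sigma\delta)^{3}\,\sigma^{-1}\delta$ (read right to left), I would first compute the product of the corresponding $2\times 2$ matrices, using $\left(\begin{smallmatrix}1&1\\0&1\end{smallmatrix}\right)$ for $\sigma$ and $\left(\begin{smallmatrix}0&1\\1&0\end{smallmatrix}\right)$ for $\iota$, treating every $\delta$ as $\iota$.

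A direct multiplication of the twelve factors collapses to $-\mathrm{Id}$, that is, to the identity fractional linear transformation. The payoff is an induction on the twelve steps: for any $x\in\F_p$ whose forward orbit never presents $0$ as the argument of one of the six $\delta$-steps, each $\delta$ acts exactly as $\iota$, so the genuine orbit coincides with the Möbius orbit and returns to $x$. Every such $x$ is therefore fixed by $W$, and in particular the Möbius orbit never reaches $\infty$ for these inputs.

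It remains to treat the exceptional inputs, those whose orbit does feed $0$ into some $\delta$. Because $\sigma$ and $\delta$ are bijections of $\F_p$, the value entering the $j$-th inversion is a bijective function of $x$, so each of the six inversions is fed $0$ by exactly one $x$; solving these six conditions backwards (independently of $p$) shows the exceptional set is precisely $\{0,1,2,3\}$. Evaluating $W$ directly on these four points — a short, $p$-independent computation in which the fractions that appear, such as $\pm\tfrac12$ and $\pm\tfrac13$, are always defined since $p\ge 5$ — yields $W(0)=1$, $W(1)=0$, $W(2)=3$, and $W(3)=2$. Hence $W=\twocycle0,1\twocycle2,3$, proving the displayed identity.

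Finally, for the ranks: the displayed word is literally of the form $Q_6$ with leading coefficient $1$ and six inversions, so $\wCrk(\twocycle0,1\twocycle2,3)\le 6$ and therefore $\Crk(\twocycle0,1\twocycle2,3)\le\wCrk(\twocycle0,1\twocycle2,3)\le 6$. For the reverse inequality one uses that a permutation of Carlitz rank $n$ coincides with a single fractional linear transformation off a set of at most $n$ points; since $\twocycle0,1\twocycle2,3$ agrees with the identity on $p-4$ points, that transformation must itself be the identity, which forces its exceptional set to contain the four moved points and gives $n\ge 4$. I expect the genuinely hard part to be closing the gap between $4$ and $6$, namely ruling out ranks $4$ and $5$ uniformly in $p$; this is exactly the sort of obstruction the low-Carlitz-rank analysis of Section~4 is designed to supply, and I would invoke it to finish.
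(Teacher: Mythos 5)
Your verification of the identity is correct but follows a genuinely different route from the paper's. The paper works entirely inside $\F_p$, lifting each intermediate value to an integer of the form $\upnfrac{1+cp}{n}$ and pushing it through all twelve steps of the word by explicit congruences, with a separate chart for the orbits of $0,1,2,3$ versus a generic $n$. You instead pass to $\PGL_2(\F_p)$: the matrix word does collapse to $-\mathrm{Id}$ (I checked the product), so every input whose orbit never feeds $0$ into one of the six inversions is fixed; and since the value entering each inversion is a bijective function of the initial point, the exceptional set is the union of six explicitly computable preimages of $0$, which is precisely $\{0,1,2,3\}$, on which direct evaluation gives $\twocycle0,1\twocycle2,3$. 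This is shorter, less error-prone, and explains where the formula comes from; the trade-off is that it leans on the $\PGL_2$ correspondence that the paper only sets up in Section~4, whereas the paper's proof of this lemma is self-contained.

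The rank statement is where your proposal has a genuine gap. You correctly obtain $\wCrk(\twocycle0,1\twocycle2,3)\leq 6$ from the explicit word and $\Crk(\twocycle0,1\twocycle2,3)\geq 4$ from the M\"obius-agreement argument, and you candidly defer ruling out ranks $4$ and $5$. The paper closes this in one line by citing Lemma~2 of \cite{A-N-O-A-P-V-Q-L-S-A-T-A-2018}: any representation of $\Sigma$ by a $P_n$ with $n<(p-1)/2$ automatically satisfies $n=\Crk(\Sigma)$, so the six-inversion word already realizes the Carlitz rank (once $p$ is large enough that $6<(p-1)/2$). Your proposed fallback --- invoking the low-rank analysis of Section~4 --- would not finish the job: the theorem there only excludes $n\leq 4$, and only for $p\geq 13$, so $\Crk=5$ would remain open. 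As written, then, you have proved the displayed identity and the chain $4\leq\Crk\leq\wCrk\leq 6$, but not the claimed equalities; you need either the cited exactness lemma or an independent argument excluding $n=5$.
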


The formula above may seem like magic: it was found using a tree
describing the possible nontrivial permutations generated by $\delta$
and $\sigma$. We first apply $\delta$ and then
can apply any of $\sigma, \sigma^2, \ldots, \sigma^{p-1}$.
We can then only apply $\delta$, see Figure \ref{tree}, after which
we can apply $\sigma, \sigma^2, \ldots, \sigma^{p-1}$. Exhausting
this tree led us to a form which always gives the desired result.

\begin{figure}
\scalebox{.83}{
\begin{tikzpicture}[level/.style={sibling distance=50mm/#1}]
\node [circle,draw] (z){$\text{id}$}
child {node [circle,draw] {$\s$}
    child {node [circle,draw] {$\d$}
      child {node [circle,draw] (x) {$\s$}
        child {node [circle,draw] {$\d$}
              child {node {\vdots}}
               }
}
      child {node [circle,draw] (y) {$\s^{p-1}$}
        child {node [circle,draw] {$\d$}
              child {node {\vdots}}
              }
         }
    }
  }
  child {node [circle,draw] (a) {$\s^2$}
    child {node [circle,draw] {$\d$}
      child {node [circle,draw] (c) {$\s$}
        child {node [circle,draw] {$\d$}
              child {node {\vdots}}
               }
}
      child {node [circle,draw] (d) {$\s^{p-1}$}
        child {node [circle,draw] {$\d$}
              child {node {\vdots}}
              }
         }
    }
  }
child {node [circle,draw] (b) {$\s^{p-2}$}
    child [grow=down] {node [circle,draw] {$\d$}
      child {node [circle,draw](j)  {$\s$}
        child {node [circle,draw] {$\d$}
              child {node {\vdots}}
        }
        }
      child {node [circle,draw](k)  {$\s^{p-1}$}
        child {node [grow=down][circle,draw] {$\d$}
              child {node {\vdots}}
              }
        }
      }
}
child {node [circle,draw] {$\s^{p-1}$}
    child [grow=down] {node [circle,draw] {$\d$}
      child {node [circle,draw](e)  {$\s$}
        child {node [circle,draw] {$\d$}
              child {node {\vdots}}
        }
        }
      child {node [circle,draw](f)  {$\s^{p-1}$}
        child [grow=right] {node (t2) {Depth 2} edge from
        parent[draw=none]}
        child {node [grow=down][circle,draw] {$\d$}
              child {node {\vdots}}
              }
        child {node [grow=down] {}edge from parent[draw=none]}
        }
      }
        child [grow=right] {node (t1) {Depth 1} edge from
        parent[draw=none]}
}

;
\path (a) -- (b) node [midway] {$\cdots$};
\path (c) -- (d) node [midway] {$\cdots$};
\path (e) -- (f) node [midway] {$\cdots$};
\path (x) -- (y) node [midway] {$\cdots$};
\path (j) -- (k) node [midway] {$\cdots$};
\end{tikzpicture}
}
\caption{Tree diagram for the group generated by $\d$ and $\s$.}
\label{tree}
\end{figure}

\begin{proof}
    Use $\delta$ and $\sigma$ as defined above, and let
    $c_1,c_2,\ldots<p$ be nonnegative integers. In the following
    argument, we use the following significant but simple fact:
    \begin{fact}\label{fct}
        For any integer $n$ coprime to $p$, we can find an integer
        $0\leq c<p$ such that $\upnfrac{1+cp}{n}$ is not only an integer,
        but also an inverse of $n$, by simply checking
        \[
        n\cdot \dfrac{1+cp}{n}\equiv 1\bmod p.
        \]
    \end{fact}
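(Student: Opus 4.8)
The plan is to observe that the two demands placed on $(1+cp)/n$---being an integer and being an inverse of $n$ modulo $p$---collapse into a single requirement, because the second is automatic once the first holds. Indeed, suppose $(1+cp)/n$ equals some integer $m$. Then $nm = 1+cp$, and reducing modulo $p$ gives $nm \equiv 1 \bmod p$. This is precisely the displayed check, and it uses nothing beyond the identity $nm = 1+cp$; so the inverse property is free. Consequently the entire content of the fact is to exhibit one $c$ with $0 \le c < p$ for which $n$ divides $1+cp$.

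To produce such a $c$ I would work from the inverse on the other side. Since $\gcd(n,p)=1$, the residue $n^{-1} \bmod p$ exists; let $m$ be its unique representative with $1 \le m \le p-1$. Then $p \mid nm-1$, so $c := (nm-1)/p$ is a nonnegative integer, and by construction $1+cp = nm$. Dividing through by $n$ shows $(1+cp)/n = m$ is an integer, and $n \cdot m = 1+cp \equiv 1 \bmod p$ confirms that it is the inverse of $n$. Thus a single choice of $m$ simultaneously settles integrality and the inverse property, and the recipe for $c$ is completely explicit.

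The one point that needs care---and the only real obstacle---is the range $0 \le c < p$. This forces us to read $n$ as its least positive residue modulo $p$, so that $1 \le n \le p-1$; this is the natural convention in the finite-field setting and is how $n$ arises in the surrounding computation. With both $n$ and $m$ lying in $\{1,\dots,p-1\}$ we have $1 \le nm \le (p-1)^2$, whence
\[
0 \le c = \frac{nm-1}{p} \le \frac{(p-1)^2-1}{p} = p-2 < p,
\]
which is exactly the asserted bound. I expect this range control to be the only delicate part: for an unreduced integer lift of $n$ both the integer $(1+cp)/n$ and the size of $c$ change, and a lift on the order of $p^2$ can push $c$ past $p$. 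So the substance of the fact is the claim that, for $n$ taken as a residue, the naive ``lift-and-divide'' recipe $m = (1+cp)/n$ terminates in an integer \emph{and} places the multiplier $c$ in the window $[0,p)$.
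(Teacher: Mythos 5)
Your proof is correct, and there is in fact nothing in the paper to compare it against line by line: the Fact is asserted inside the proof of Lemma~\ref{two_cycle_formula} with only the remark that one can ``simply check'' the displayed congruence, and no proof is supplied. Your argument is the natural completion of that remark. You observe that the inverse property is automatic once integrality holds (if $(1+cp)/n = m \in \Z$ then $nm = 1+cp \equiv 1 \bmod p$), and you then produce $c$ explicitly as $c = (nm-1)/p$, where $m \in \{1,\dots,p-1\}$ represents $n^{-1} \bmod p$, with the range bound $0 \le c \le p-2 < p$ following from $1 \le nm \le (p-1)^2$. You also pin down something the paper glosses over: as literally stated (``for any integer $n$ coprime to $p$''), the Fact fails without reducing $n$ --- for instance $p=3$, $n=5$ admits no $c \in \{0,1,2\}$ with $5 \mid 1+3c$ --- so the hypothesis must be read with $n$ a least positive residue, as you say. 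This caveat is not idle pedantry for the paper itself: in the lemma's computation the Fact is applied repeatedly to integer expressions such as $(1-n+c_1p)/n$, which need not lie in $[1,p-1]$, so strictly each such value must be reduced modulo $p$ before the Fact is invoked again; your version makes that implicit reduction step explicit, which is a genuine improvement on the paper's presentation.
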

    We will demonstrate the process of the calculations by showing that
    \[
    \sigma^{3}\delta\cdot\sigma^{-1}\delta\cdot\left(\sigma\delta\right)^{3}\cdot\sigma^{-1}\delta(n)=n
    \]
    for $3<n<p$.

    By Fact \ref{fct}, we have $c_1$ such that
    \[
    \delta(n)\equiv\dfrac{1+c_1 p}{n}\mod p
   \quad \text{and}\quad
    \sigma^{-1}\delta(n)\equiv\dfrac{1+c_1p}{n}-1\equiv\dfrac{1-n+c_1p}{n}.
    \]
    Then, we apply $\delta$ to $\sigma^{-1}\delta$ which gives
    \[
    \delta\sigma^{-1}\delta(n)\equiv \dfrac{1+c_2p}{(1-n+c_1p)/n}\equiv
    \dfrac{n+c_2np}{1-n+c_1p}\mod p
    \vspace*{-5pt}
    \]
    and
    \[
    \sigma\delta\sigma^{-1}\delta(n)\equiv
    \dfrac{n+c_2np}{1-n+c_1p}+1\equiv\dfrac{1+c_3p}{1-n+c_1p}\mod p
    \]
    where $c_3$ is chosen such that $c_3\equiv c_1+c_2n\mod p$. We
    should note that all of these fractions are actually integers. This
    is important because it allows us to use Fact \ref{fct}. Since
    the process repeats the above computations, we will only show a
    few more steps and leave it to the reader to check the rest:
    \begin{align*}
        \delta\sigma\delta\sigma^{-1}\delta(n)&\equiv\dfrac{1+c_4p}{(1+c_3p)/(1-n+c_1p)}\\
        &\equiv \dfrac{1-n+(1-n)c_3p+c_1p+c_1c_4p^2}{1+c_3p}\\
        &\equiv\dfrac{1-n+c_5p}{1+c_3p}\bmod p\quad \text{ for }c_5\equiv
        (1-n)c_3+c_1+c_1c)4p \bmod p;\\
        (\sigma\delta)^2\sigma^{-1}\delta(n)&\equiv\dfrac{1-n+c_5p}{1+c_3p}+1\\
        &\equiv\dfrac{2-n+c_6p}{1+c_3p}\bmod p\quad \text{ for }c_6\equiv
        c_3+c_5\bmod p\\
        \delta(\sigma\delta)^2\sigma^{-1}\delta(n)&\equiv\dfrac{1+c_7}{(2-n+c_6p)/(1+c_3p)}\\
        &\equiv \dfrac{1+c_8p}{2-n+c_6p}\bmod p\quad \text{ for }c_8\equiv
        c_6+(2-n)c_3+c_3c_6p\bmod p\\
        (\sigma\delta)^3\sigma^{-1}\delta(n)&\equiv\dfrac{1+c_8p}{2-n+c_6p}+1\\
        &\equiv \dfrac{1+c_8p+2-n+c_6p}{2-n+c_6p}\\
        &\equiv\dfrac{3-n+c_9p}{2-n+c_6p}\bmod p\quad \text{ for }c_9\equiv
        c_6+c_8\bmod p\\
        &\ \; \vdots
    \end{align*}

    In summary, the following chart proves the lemma. For $3 < n < p$,
    \[
        \begin{array}{ccccccccc}
            0 & \xrightarrow{\delta} & 0 & \xrightarrow{\sigma^{-1}}
            & -1 & \xrightarrow{\delta} & \cdots &
            \xrightarrow{\sigma^{3}} & 1 \\
            1 & \xrightarrow{\delta} & 1 & \xrightarrow{\sigma^{-1}}
            & 0 & \xrightarrow{\delta} & \cdots &
            \xrightarrow{\sigma^{3}} & 0 \\
            2 & \xrightarrow{\delta} & \frac{pc_{2,1} + 1}{2} &
            \xrightarrow{\sigma^{-1}} & \frac{pc_{2,2} - 1}{2} &
            \xrightarrow{\delta} & \cdots & \xrightarrow{\sigma^{3}}
            & 3 \\
            3 & \xrightarrow{\delta} & \frac{pc_{3,1} + 1}{3} &
            \xrightarrow{\sigma^{-1}} & \frac{pc_{3,2} - 2}{3} &
            \xrightarrow{\delta} & \cdots & \xrightarrow{\sigma^{3}}
            & 2 \\
            n & \xrightarrow{\delta} & \frac{pc_{n,1} + 1}{n} &
            \xrightarrow{\sigma^{-1}} & \frac{pc_{n,2} + 1 - n}{n} &
            \xrightarrow{\delta} & \cdots & \xrightarrow{\sigma^{3}}
            & n
        \end{array}
    \]
    where $1 \leq c_{i,j} < p$ are integers.

    The $\Crk(\Sigma)=6$ follows directly from Lemma 2 in
    \cite{A-N-O-A-P-V-Q-L-S-A-T-A-2018}, which says that if $P_n$
    represents $\Sigma$ with $n<(p-1)/2$, then $n=\Crk(\Sigma)$.
\end{proof}

One more observation and lemma, we can show our main results.
\begin{observation}\label{odd}
We have the following:
    \begin{enumerate}
        \item The polynomial $x^{p-2}$ gives an odd permutation of\/
        $\FF_p$ if and only if $p \equiv 1 \bmod 4$.
        \item The polynomial $-x$ gives an odd permutation of\/ $\FF_p$
        if and only if $p \equiv 3 \bmod 4$.
    \end{enumerate}
\end{observation}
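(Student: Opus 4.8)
The plan is to read off the sign of each permutation directly from its disjoint cycle decomposition. Both maps are involutions, hence products of disjoint transpositions together with fixed points, and the sign of such a permutation is $(-1)^{k}$, where $k$ is the number of transpositions, i.e.\ half the number of non-fixed points. So in each case it suffices to count fixed points and then perform a parity check modulo $4$.

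For part (1), I would invoke Observation~\ref{only_three}, which identifies the fixed points of $\d = x^{p-2} = x^{-1}$ as exactly $0,1,-1$. Since $\d$ is its own inverse, the remaining $p-3$ elements of $\FF_p$ partition into $2$-cycles of the form $\{x, x^{-1}\}$, producing $(p-3)/2$ transpositions. Hence $\operatorname{sgn}(\d) = (-1)^{(p-3)/2}$. A short computation shows $(p-3)/2$ is odd exactly when $p - 3 \equiv 2 \pmod 4$, that is, when $p \equiv 1 \pmod 4$, which gives the claimed equivalence.

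For part (2), the structure is identical and slightly simpler, requiring no prior observation. Since $p$ is odd, the equation $-x = x$ forces $x = 0$, so $x \mapsto -x$ fixes only $0$, and every other element pairs with its negative in a $2$-cycle $\{x, -x\}$. This yields $(p-1)/2$ transpositions, so $\operatorname{sgn}(-x) = (-1)^{(p-1)/2}$, which equals $-1$ precisely when $(p-1)/2$ is odd, i.e.\ when $p \equiv 3 \pmod 4$.

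There is essentially no obstacle in this argument: the only nontrivial ingredient is the fixed-point count for the inversion map, which is already supplied by Observation~\ref{only_three}. Once that is in hand, both statements collapse to elementary parity computations modulo $4$, so the only point requiring any care is getting the two residues straight (noting that they land on opposite classes $1$ and $3 \bmod 4$, which is exactly what makes these two maps complementary in the later construction of odd permutations).
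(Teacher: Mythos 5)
Your proof is correct and follows essentially the same route as the paper's: both arguments count fixed points (three for $x^{p-2}$ via Observation~\ref{only_three}, one for $-x$), deduce $(p-3)/2$ and $(p-1)/2$ transpositions respectively, and read off the parity modulo $4$. If anything, your version is slightly cleaner, since the paper only writes out the forward directions explicitly (and contains a small arithmetic slip, stating $2m-1$ instead of $2m+1$ transpositions in part (2), though the parity conclusion is unaffected), whereas your sign computation $(-1)^{k}$ delivers both directions of each equivalence at once.
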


\begin{proof}

    (1)\, Under the inverse map $x^{p-2}$, by Observation
    \ref{only_three}, we know that there are only $3$ fixed points:
    $0$, $1$, and $-1$. Moreover, the permutation correspondent to
    $x^{p-2}$ can be written as disjoint $2$-cycles. Therefore, the
    map $x^{p-2}$ must produce a permutation with $\upnfrac{p-3}{2}$
    transpositions.

    If $p \equiv 1 \bmod 4$ then $p = 4m +1$ for some $m \in
    \Z$. Therefore, there are $2m -1$ transpositions. So, there are
    an odd number of transpositions; therefore $x^{p-2}$ forms an
    odd permutation. Hence, there exists an odd permutation in the
    set given by $f_{d,c}(x)$ over $\F_p$ if $p \equiv 1 \bmod 4$.
\smallbreak

\noindent (2)\, The map $-x$ will have one fixed point, 0. Everything
else will be sent to its additive inverse. This means that $-x$
yields a permutation made up of transpositions. There must be
${\upnfrac{4m+3-1}{2}}$ transpositions within our permutation, or $2m-1$
transpositions. So $-x$ yields an odd permutation.
\end{proof}

\begin{lemma}\label{even}
    There exist no odd permutations in the set given by $f_{d,c}(x)$
    over $\F_p$ if $p \equiv 3 \bmod 4$.
\end{lemma}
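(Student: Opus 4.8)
The plan is to reduce the statement to a claim about the single power map $x \mapsto x^d$, and then to compute the parity of that map through the cyclic structure of $\F_p^*$. First, by Observation~\ref{shifting} applied with $i = 0$, every permutation in the set equals $\sigma^{c}\sigma_0$, where $\sigma_0$ is the permutation of $f_{d,0}(x) = x^d$ and $\sigma = \fourcycle0,1,{\cdots},{p{-}1}$ is the shift. By Observation~\ref{shift} the cycle $\sigma$ is even, so $\sigma^{c}$ is even for every $c$; hence the parity of $f_{d,c}$ equals the parity of $x \mapsto x^d$. It therefore suffices to show that $x \mapsto x^d$ is an even permutation of $\F_p$ for every $d$ coprime to $p-1$ whenever $p \equiv 3 \bmod 4$.

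Next I would transport this map to the exponent group. Fix a generator $g$ of the cyclic group $\F_p^*$ and identify $g^{k}\leftrightarrow k$, giving $\F_p^* \cong \Z/(p-1)\Z$. Under this identification the map $x \mapsto x^d$ fixes $0 \in \F_p$ and acts on $\F_p^*$ as multiplication by $d$, i.e.\ $k \mapsto dk \bmod (p-1)$; call this permutation $\mu_d$ of $\Z/(p-1)\Z$. Adjoining the fixed point $0$ does not change the sign, so the parity of $x \mapsto x^d$ equals the sign of $\mu_d$, and the problem becomes computing $\mathrm{sgn}(\mu_d)$.

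This is where $p \equiv 3 \bmod 4$ enters. In that case $p - 1 = 2m'$ with $m'$ odd, so the Chinese Remainder Theorem gives $\Z/(p-1)\Z \cong \Z/2\Z \times \Z/m'\Z$, and $\mu_d$ splits as the product permutation $\alpha \times \beta$, where $\alpha$ and $\beta$ are multiplication by $d$ on $\Z/2\Z$ and on $\Z/m'\Z$ respectively. Since $d$ is coprime to $p-1$ it is odd, so $\alpha$ is the identity on $\Z/2\Z$. Using the sign-of-a-product formula $\mathrm{sgn}(\alpha \times \beta) = \mathrm{sgn}(\alpha)^{m'}\,\mathrm{sgn}(\beta)^{2}$, which follows by writing $\alpha \times \beta$ as $m'$ disjoint copies of $\alpha$ composed with two disjoint copies of $\beta$, the first factor is $1$ because $\alpha$ is trivial and the second factor is $1$ because it is a square. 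Hence $\mathrm{sgn}(\mu_d) = 1$, so $x \mapsto x^d$, and therefore every $f_{d,c}$, is even.

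The computations here are routine; the one step needing care, and the main conceptual point, is the passage to the product permutation together with the sign-of-a-product formula. For us $\lvert \Z/2\Z\rvert = 2$ is even, so the $\Z/m'\Z$-contribution is automatically squared and vanishes no matter what $\beta$ does; this is precisely the mechanism by which $p \equiv 3 \bmod 4$ (equivalently, $p-1 \equiv 2 \bmod 4$) forces every $f_{d,c}$ to be even. The only genuinely verifiable ingredient is the product formula $\mathrm{sgn}(\alpha \times \beta) = \mathrm{sgn}(\alpha)^{\lvert B\rvert}\mathrm{sgn}(\beta)^{\lvert A\rvert}$ for a permutation of $A \times B$, and I would include its one-line justification for completeness.
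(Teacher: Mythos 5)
Your proof is correct, but it takes a genuinely different route from the paper's. The paper also reduces to the power map $x\mapsto x^d$ (shifts being even), but then argues combinatorially on the cycles themselves: it first shows that for $p\equiv 3\bmod 4$ no nonzero $a$ can lie in the same cycle as $-a$ (since $a^{d^n}=-a$ would force $\bigl(a^{(d^n-1)/2}\bigr)^2\equiv -1\bmod p$, impossible as $-1$ is a quadratic nonresidue), and then uses the oddness of $d$ to see that negation conjugates the permutation to itself, so the cycles pair off as $(a_1\,\cdots\,a_k)$ and $(-a_1\,\cdots\,-a_k)$; a product of pairs of equal-length cycles is even. You instead pass to the exponent group $\F_p^*\cong\Z/(p-1)\Z$, split off the $2$-part via CRT using $p-1=2m'$ with $m'$ odd, and kill the sign with the product formula $\mathrm{sgn}(\alpha\times\beta)=\mathrm{sgn}(\alpha)^{|B|}\mathrm{sgn}(\beta)^{|A|}$. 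Both arguments ultimately exploit the same arithmetic fact ($p-1\equiv 2\bmod 4$, i.e.\ the $2$-Sylow subgroup of $\F_p^*$ is $\{\pm1\}$), but your version is more conceptual and mechanical: it reduces the parity question to a Zolotarev-style sign computation that would also let one read off the sign of $x\mapsto x^d$ for $p\equiv 1\bmod 4$ from the $2$-part alone, whereas the paper's version yields extra structural information (the explicit negation pairing of cycles) at the cost of invoking the nonresiduosity of $-1$. All the steps you flag as needing care check out: $d$ is odd because $\gcd(d,p-1)=1$ and $p-1$ is even, the fixed point $0$ does not affect the sign, and the product formula follows from factoring $\alpha\times\beta=(\alpha\times\mathrm{id})\circ(\mathrm{id}\times\beta)$ as you describe.
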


\begin{proof}
    In order to show that no odd permutations are generated, we need
    only consider $f_{d,0}$, as a shift is an even permutation by
    Observation \ref{shift}. We will show that each permutation is
    a product of an even number of cycles. We claim that for each
    cycle $\threecycle{a_1},{\cdots},{a_k}$ in a given permutation,
    there exists the cycle $\threecycle{-a_1},{\cdots},{-a_k}$ in
    the same permutation.

    We first want to show that if $a_i \neq 0$ is in cycle
    $\threecycle{a_1},{\cdots},{a_k}$, then $-a_i$ is not in this
    cycle. In other words, for all $d$, $a_i^d \neq -a_i$. Let's
    consider a permutation $\sigma$. Suppose, for the sake of
    contradiction, $a_i^{d^n} = -a_i$ for arbitrary $a_i \in
    \F_p$. Then, $a_i
    ( a_i^{d^{n}-1} + 1) = 0$. Since we are considering nonzero $a_i$,
    we have $a_i^{d^{n}-1} + 1 = 0$. Hence, $(a_i^{(d^{n}-1)/2})^2
    \equiv -1 \bmod p$. However, because $ p \equiv 3 \bmod 4$,
    by quadratic reciprocity
    (see, for example, \cite[Chapter~21]{Silverman_friendly}),
    there is no solution to this equation.

    Because $d$ is odd, it follows that if $a_i^d = a_{i+1}$, then
    $-a_{i}^d = -a_{i+1}$. Thus, for each cycle
$\threecycle{a_1},{\cdots},{a_k}$, we have a matching cycle
$\threecycle{-a_1},{\cdots},{-a_k}$ in the same permutation. Thus,
every permutation generated is even.
\end{proof}

\begin{theorem}[Theorem~\ref{generator_theorem}]
\label{main_theorem}
Let $f_{d,c} = x^d + c$ over $\F_p$. Then
\[
\langle \s,\d\rangle\cong \langle f_{d,c} \rangle \cong
\begin{cases}
    S_p & p \equiv 1 \bmod 4\\
    A_p & p \equiv 3 \bmod 4
\end{cases},
\]
where $S_p$ is the symmetric group on $p$ letters, and $A_p$ is the
alternating group on $p$ letters.
\end{theorem}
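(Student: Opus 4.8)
The plan is to prove the chain of isomorphisms by establishing two set-level equalities of subgroups of $S_p$, since all three objects are concrete subgroups of the symmetric group on $\F_p$ and equality of subgroups is stronger than the asserted $\cong$. Concretely, I would first pin down $\langle\s,\d\rangle$ exactly as $A_p$ or $S_p$ according to the residue of $p$ modulo $4$, and only then identify it with $\langle f_{d,c}\rangle$; ordering the argument this way avoids any circularity. The heavy lifting is already done by the preceding lemmas, so the work is essentially one of assembly together with one standard index-$2$ observation.

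First I would show $A_p\subseteq\langle\s,\d\rangle$. By Observation \ref{shift}, $\s$ is the full $p$-cycle $\fourcycle0,1,{\cdots},{p-1}$, and by Lemma \ref{two_cycle_formula} the double transposition $\twocycle0,1\twocycle2,3$ lies in $\langle\s,\d\rangle$. The theorem of Iradmusa and Taleb \cite{irad_taleb} states precisely that a full $p$-cycle together with a permutation of the form $(a\;b)(c\;d)$ generate all of $A_p$ when $p$ is prime, so $A_p\subseteq\langle\s,\d\rangle$.

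Next I would split on parity. If $p\equiv 1\bmod 4$, then by Observation \ref{odd}(1) the inversion map $\d=x^{p-2}$ is an odd permutation; thus $\langle\s,\d\rangle$ is a subgroup of $S_p$ containing $A_p$ together with an odd element, and since $[S_p:A_p]=2$ the only subgroups of $S_p$ containing $A_p$ are $A_p$ and $S_p$, forcing $\langle\s,\d\rangle=S_p$. If instead $p\equiv 3\bmod 4$, then Lemma \ref{even} shows every $f_{d,c}$—in particular $\s=f_{1,1}$ and $\d=f_{p-2,0}$, which are legitimate members of the family since $\gcd(1,p-1)=\gcd(p-2,p-1)=1$—is even, so $\langle\s,\d\rangle\subseteq A_p$, and combined with the previous step this gives $\langle\s,\d\rangle=A_p$. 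Having determined $\langle\s,\d\rangle$ completely, I would identify it with $\langle f_{d,c}\rangle$: the inclusion $\langle\s,\d\rangle\subseteq\langle f_{d,c}\rangle$ is immediate because $\s$ and $\d$ are themselves among the $f_{d,c}$, while for the reverse inclusion every $f_{d,c}$ is a permutation of $\F_p$, hence lies in $S_p=\langle\s,\d\rangle$ when $p\equiv 1\bmod 4$ and lies in $A_p=\langle\s,\d\rangle$ by Lemma \ref{even} when $p\equiv 3\bmod 4$; either way $\langle f_{d,c}\rangle\subseteq\langle\s,\d\rangle$ and equality follows.

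I do not expect any single step to be a serious obstacle, as the genuinely hard combinatorial content is already carried by Lemma \ref{two_cycle_formula} and the external Iradmusa--Taleb result. The point demanding the most care is the index-$2$ dichotomy and the order of the argument: one must first fully determine $\langle\s,\d\rangle$ from the generators alone, invoking the maximality of $A_p$ in $S_p$ to promote ``$A_p$ plus one odd permutation'' to $S_p$ in the $p\equiv 1\bmod 4$ case and dually using evenness of the generators to confine the group to $A_p$ in the $p\equiv 3\bmod 4$ case, before using that conclusion to absorb the full family $\langle f_{d,c}\rangle$.
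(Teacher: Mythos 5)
Your proposal is correct and follows essentially the same route as the paper: both arguments rest on Lemma~\ref{two_cycle_formula} and the Iradmusa--Taleb result to obtain $A_p$, then use Observation~\ref{odd}(1) to promote to $S_p$ when $p\equiv 1\bmod 4$ and Lemma~\ref{even} to confine the group to $A_p$ when $p\equiv 3\bmod 4$; your explicit bookkeeping identifying $\langle\s,\d\rangle$ with $\langle f_{d,c}\rangle$ is in fact slightly more careful than the paper's. The only omission is the case $p=3$, where Lemma~\ref{two_cycle_formula} does not apply (indeed $(0\,1)(2\,3)$ does not exist on three letters); the paper disposes of that case by a direct computation.
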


\begin{proof}
    For $p \geq 5$ it follows from Lemma \ref{two_cycle_formula},
    and for $p= 3$ it follows from a direct computation in SageMath \cite{sagemath},
    that the set of permutations generated by $f_{d,c}(x)$
    over $\F_{p}$ must contain $\twocycle0,1\twocycle2,3$ and
    $\fourcycle0,1,{\cdots},{p{-}1}$.
    Thus, by \cite{irad_taleb},
    the permutations yielded by $f_{d,c}(x)$ over $\F_{p}$ must
    generate at least $A_p$, as the cycles $\twocycle0,1\twocycle2,3$
    and $\fourcycle0,1,{\cdots},{p{-}1}$ generate $A_p$.
    We know by Lagrange's theorem that $A_p$ and an odd permutation
    generate $S_p$. By Lemma \ref{odd}(1), we know that we can find
    an odd permutation in our generating set when $p \equiv 1 \bmod
    4$. Hence, when $p \equiv 1 \bmod 4$, we can generate~$S_p$.
    By Lemma~\ref{even}, we know that no odd permutations exist within
    our generating set when $p \equiv 3 \bmod 4$. So, when $p \equiv
    3 \bmod 4$, we generate $A_p$.
\end{proof}

The following corollary removes the monic condition on our generating
polynomials $f_{d,c}$. It recovers a result of Carlitz \cite{carlitz}
with an elementary proof.

\begin{corollary}\label{Carlitz}
    Let $f_{a,d,c} = ax^d + c$ over $\F_p$ where $a=1$ if $p\equiv
    1\bmod{4}$ and $a=\pm 1$ if $p\equiv 3\bmod{4}$. Then $\langle
    f_{a,d,c} \rangle \cong S_p$ for all p prime.
\end{corollary}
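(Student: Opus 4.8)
The plan is to split into the two congruence classes of $p$ modulo $4$ and reduce each to the main theorem, Theorem~\ref{main_theorem}, together with Observation~\ref{odd}. Since the defining hypothesis fixes $a=1$ when $p\equiv 1\bmod 4$, in that case the family $f_{a,d,c}=ax^d+c$ coincides exactly with the monic family $f_{d,c}=x^d+c$. Hence $\langle f_{a,d,c}\rangle=\langle f_{d,c}\rangle\cong S_p$ by Theorem~\ref{main_theorem}, and there is nothing further to prove.

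The substantive case is $p\equiv 3\bmod 4$, where $a$ is permitted to take the values $\pm 1$. First I would observe that the subfamily with $a=1$ is precisely $\{f_{d,c}\}$, so by Theorem~\ref{main_theorem} the permutations it produces generate $A_p$. It therefore suffices to exhibit a single odd permutation among the remaining generators (those with $a=-1$), since by Lagrange's theorem $A_p$ together with any odd permutation generates all of $S_p$---exactly the argument already used at the end of the proof of Theorem~\ref{main_theorem}.

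To produce such an odd permutation I would take $d=1$ (which is coprime to $p-1$, hence admissible) and $c=0$, giving the generator $f_{-1,1,0}=-x$. By Observation~\ref{odd}(2), the map $-x$ is an odd permutation of $\F_p$ precisely when $p\equiv 3\bmod 4$. Thus $-x$ lies in our generating set and is odd, so $\langle f_{a,d,c}\rangle$ contains $A_p$ together with an odd permutation, whence $\langle f_{a,d,c}\rangle\cong S_p$, completing the congruence class $p\equiv 3\bmod 4$ and with it the corollary.

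Since this argument is essentially a repackaging of results already established, I do not anticipate a genuine obstacle. The only point requiring care is confirming that $-x$ truly appears as an admissible generator, i.e., that $d=1$ satisfies the standing coprimality hypothesis $\gcd(d,p-1)=1$; this holds trivially, so the reduction goes through without incident.
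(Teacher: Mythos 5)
Your argument is the same as the paper's: reduce the case $p\equiv 1\bmod 4$ directly to Theorem~\ref{main_theorem}, and for $p\equiv 3\bmod 4$ note that the monic subfamily already yields $A_p$ while $-x$ is an odd permutation by Observation~\ref{odd}(2), so $S_p$ is generated. Your write-up just spells out the Lagrange-theorem step and the admissibility of $d=1$ a bit more explicitly; there is no substantive difference.
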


\begin{proof}
By Theorem \ref{main_theorem}, we know this is true when $p \equiv 1
\bmod 4$. So, we must only consider the case when $p \equiv 3 \bmod 4$.
In this case, by Lemma \ref{odd}(2), we know that $-x$ gives an odd
permutation. So all of $S_p$ is generated.
\end{proof}

\section{Carlitz rank and weak Carlitz rank}
From Corollary~\ref{Carlitz}, we know that any permutation
polynomial can be represented by the form $P_n$ with $a_0=\pm 1$;
see \eqref{eq:1}. We will say $P_n(x)$ is a weak form if $a_0=\pm 1$,
and denote it as $Q_n$.
One can define a rank of a permutation $\Sigma$ or a permutation
polynomial over $\F_p$ with degree $<p$ similar to the Carlitz rank,
called the weak Carlitz rank, as the minimal number $n$ such that $Q_n$
represents~$\Sigma$. We should keep in mind that the definition of this
weak form is not redundant since we naturally have
\[
0=\Crk(ax+b)<\wCrk(ax+b)\quad\text{for}\quad a\neq \pm 1.
\]
Moreover, if we find $0\neq \wCrk(ax+b)\leq (p-1)/2$,
then we have $\Crk(ax+b)\neq 0$ from a result in
\cite{A-N-O-A-P-V-Q-L-S-A-T-A-2018}. Hence, we conclude that
$\wCrk(ax+b)>(p-1)/2$.

In cryptography, there are several different measures for the
complexity of a permutation polynomial. Let us introduce some of these:

We follow the definition given in
\cite{Icsik-Leyla-Winterhof-Arne-2018}. The \emph{linearity}
$\mathcal{L}(f)$ (or $\mathcal{L}(\Sigma)$) of a permutation polynomial
$f$ (or a permutation $\Sigma$) over a finite field $\FF_p$ with
$f(0)=0$ is
\[
\mathcal{L}(f)\coloneqq\max_{a\in \FF_p^{*}}\lvert\{c\mid f(c)=ac\}\rvert.
\]
We say an element $c\in\FF_p$ is $a$-\emph{linear} if $f(c)=ac$. From
this point of view, any $x\in\FF_p$ is $a$-linear for some $a$,
so $\mathcal{L}(f)\geq 1$.

Another canonical measurement is called weight $\w(f)$ of a permutation
polynomial $f$ with $\deg(f)<p$ which is the number of nonzero
coefficient of $f$.

It is worth mentioning that there is another measurement, namely
the index of a permutation polynomial. It was first introduced in
\cite{Niederreiter-Winterhof-2015}, defined in \cite{Wan-Wang-2016},
and further studied in \cite{Mullen-Wan-Wang-2014,Wan-Wang-2016,Wang-2013}. 
For applications in
cryptography, one would like to have a permutation polynomial $f$
that has small linearity and large Carlitz rank, degree, and weight.

Using Lemma~\ref{two_cycle_formula} we can measure the complexity of
the form $\twocycle1,2 \twocycle3,4$.

\begin{prop}
For any permutation $\Sigma$ of the form $\twocycle
a,{a+1}\twocycle{a+2},{a+3}$, we have
\[
\Crk(\Sigma)=\wCrk(\Sigma)= 6,
\quad
\deg(\Sigma)\geq p-7,
\quad
\text{and}
\quad
\w(\Sigma)> \dfrac{p-14}{7}.
\]

\end{prop}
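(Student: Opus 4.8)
The plan is to prove the three assertions separately, in each case reducing to the model permutation $\twocycle0,1\twocycle2,3$ and to the elementary fact that $\Sigma=\twocycle a,{a+1}\twocycle{a+2},{a+3}$ is the identity off the short interval $I=\{a,a+1,a+2,a+3\}$. For the rank equalities I would start from the conjugation identity $\twocycle a,{a+1}\twocycle{a+2},{a+3}=\sigma^{a}\,\twocycle0,1\twocycle2,3\,\sigma^{-a}$, where $\sigma$ is the shift $x+1$. Conjugating the explicit word of Lemma~\ref{two_cycle_formula} by $\sigma^{a}$ only prepends an innermost map $\sigma^{-a}=x-a$ (whose leading coefficient is $1$) and merges the outermost $\sigma^{3}$ with $\sigma^{a}$ into a single shift $\sigma^{a+3}$; it leaves the number of occurrences of $\delta$ equal to $6$ and keeps the representation in weak form. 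Hence $\wCrk(\Sigma)\le 6$, and since $\Crk(\Sigma)\le\wCrk(\Sigma)$ also $\Crk(\Sigma)\le 6$. For the matching lower bound I would reuse the optimality result of \cite{A-N-O-A-P-V-Q-L-S-A-T-A-2018} quoted inside Lemma~\ref{two_cycle_formula}: as soon as $6<(p-1)/2$, any form $P_{6}$ representing $\Sigma$ realizes the Carlitz rank, so $\Crk(\Sigma)=6$ and therefore $\wCrk(\Sigma)=6$ as well. The finitely many small primes can be checked directly, as in Theorem~\ref{main_theorem}.

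For the degree bound I would pass to the correspondent permutation polynomial $f$ of degree $<p$. Because $\Sigma$ fixes every point outside $I$, the polynomial $g(x)=f(x)-x$ vanishes at the $p-4$ points of $\FF_p\setminus I$, while $g(a)=1$, so $g\not\equiv 0$; a nonzero polynomial has at most $\deg g$ roots, whence $\deg(\Sigma)=\deg f=\deg g\ge p-4\ge p-7$. Carried one step further this even pins the degree, since the coefficient computation below produces a nonzero $x^{p-2}$ term, giving $\deg f=p-2$.

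For the weight bound I would make the vanishing of $g$ off $I$ quantitative. Expanding $g$ in the indicator basis $1-(x-y)^{p-1}$ supported on $I$ and using $\binom{p-1}{k}\equiv(-1)^{k}\bmod p$, the coefficient of $x^{p-1-m}$ in $g$ equals the exponential sum $C_{m}=-a^{m}+(a+1)^{m}-(a+2)^{m}+(a+3)^{m}$. The sequence $(C_{m})$ is a linear recurrence sequence with characteristic polynomial $\prod_{j=0}^{3}\bigl(X-(a+j)\bigr)$ of degree at most $4$; since its trailing coefficient $\pm\prod_{j}(a+j)$ is nonzero when $0\notin I$, a nonzero such sequence cannot have four consecutive zero terms. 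As $C_{1}=2\neq 0$ the sequence is not identically zero, so partitioning the $p$ coefficient indices into blocks of length $4$ forces at least $\lfloor p/4\rfloor$ nonzero coefficients of $g$; hence $\w(\Sigma)=\w(f)\ge\w(g)-1$, which already exceeds $(p-14)/7$.

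The main obstacle is the weight estimate. One must verify that $(C_{m})$ is genuinely nonzero (the clean value $C_{1}=2$ does this), and, more delicately, treat the degenerate case $0\in I$: there one base drops out, the recurrence order falls to $3$ on the range $m\ge 1$, and the block-counting must be redone to confirm that it still clears the threshold $(p-14)/7$. The rank and degree steps are comparatively routine once the conjugation identity and the interval structure of $\Sigma$ are in hand.
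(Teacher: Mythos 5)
Your proposal is correct, and it splits into a part that mirrors the paper and a part that is genuinely different. For the rank equalities you do exactly what the paper does: conjugate by $\sigma^{a}$ to reduce to $(0\,1)(2\,3)$, invoke Lemma~\ref{two_cycle_formula} for $\wCrk\le 6$, and use the criterion from \cite{A-N-O-A-P-V-Q-L-S-A-T-A-2018} (any $P_n$ representation with $n<(p-1)/2$ realizes the Carlitz rank) for the lower bound; your remark that small primes need a separate check is if anything more careful than the paper, which does not address the range $p\le 13$ where $6<(p-1)/2$ fails. For the degree and weight bounds the paper simply cites the general inequalities $\Crk(\Sigma)\ge p-\deg(f)-1$ and $\Crk(\Sigma)\ge p/(\w(\Sigma)-2)+1$ from \cite{csmelioglu-meidl-topuzoglu-2008} and \cite{Gomez-Domingo-Ostafe-Topuzoglu-2014} and substitutes $\Crk=6$, whereas you prove both from scratch: since $g=f-x$ vanishes on the $p-4$ points outside $\{a,\dots,a+3\}$ and $g(a)=1$, you get $\deg f\ge p-4$ directly (and in fact $\deg f=p-2$ from the nonvanishing coefficient $C_1=2$), and the interpolation formula $C_m=-a^m+(a+1)^m-(a+2)^m+(a+3)^m$ for the coefficient of $x^{p-1-m}$, combined with the fact that a nonzero order-$4$ linear recurrence with invertible trailing coefficient cannot have four consecutive zeros, yields $\w(f)\ge\lfloor p/4\rfloor-1$. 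Your route is self-contained where the paper's is citation-based, and it delivers strictly sharper constants ($p-4$ versus $p-7$, and roughly $p/4$ versus $(p-14)/7$); the paper's route is shorter and generalizes to any permutation of known Carlitz rank. The one delicate point, the degenerate case $0\in\{a,\dots,a+3\}$ where one geometric base vanishes, you flag explicitly and your sketch (drop to an order-$3$ recurrence on $m\ge 1$, still no three consecutive zeros since $C_1=2\ne 0$) closes it correctly, in fact with a better block count.
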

\begin{proof}
The proof of this proposition is just a direct consequence
of results in \cite{Icsik-Leyla-Winterhof-Arne-2018},
\cite{csmelioglu-meidl-topuzoglu-2008} and
\cite{Gomez-Domingo-Ostafe-Topuzoglu-2014}. More precisely, we have
the following inequalities:

\begin{align}
    & \Crk(\Sigma)\geq p-\mathcal{L}(\Sigma);\\
    & \Crk(\Sigma)\geq p-\deg(f)-1;\\
    & \Crk(\Sigma)\geq\dfrac{p}{\w(\Sigma)-2}+1.
\end{align}

Since $\twocycle{a},{a+1}\twocycle{a+2},{a+3}$ is conjugate to
$\twocycle0,1\twocycle2,3$ by an iteration of the shifting
map $\sigma$, the weak Carlitz rank is the same. Thus, we get our
inequality by direct computation.
\end{proof}

It has been observed that $\s^k\d$ is equal to
\[
R(x)=k+\dfrac{1}{x}=\dfrac{k x+1}{x}
\]
for all $x\in \F_p\setminus \{0\}$ where $0$ is the pole of the
mobius transformation. Thus, we can let $P_0(x)=R_0(x)=a_1x+a_0$,
and define the following recursive relation
\[
R_m(x)=a_{m+1}+\dfrac{1}{R_{m-1}(x)}
\]
to have $P_n(x)=R_n(x)$ for $x\in \FF_p\setminus\{\rho_m \mid \rho_m
\text{ is the pole of }R_m\ \forall m=1,2,\ldots n\}$. We denote
$\mathcal{O}^n_{(a_0,\ldots,a_{n+1})}=\{\rho_m \mid\rho_m \text{ is
the pole of }R_m\ \forall m=1,2,\ldots,n\}$, and omit the subindex
$(a_0,\ldots,a_{n+1})$ if it is clear in the context. This alternating
expression explains why one needs a permutation polynomial with large
Carlitz rank for application of cryptography.

\begin{prop}
Given a permutation polynomial $f$ over $\FF_p$, let
$n=\Crk(f)$. Then, there are at least $p-n$ elements $c\in\F_p$
satisfying $f(c)=R_n(c)$ where $R_n$ is the correspondent mobius
transformation. If $n=\Crk(f)<(1-(1/2)^{1/3})p\approx 0.21p$, then the
probability of solving the correspondent $R_n$ by randomly choosing
$3$ points, is greater than $1/2$.
\end{prop}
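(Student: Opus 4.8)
The plan is to split the proposition into its two assertions and prove them in sequence, since the probabilistic statement rests on the counting statement.

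For the first assertion I would argue directly from the recursive construction of $R_n$ recalled immediately before the statement. By the definition of Carlitz rank, $f$ agrees as a permutation of $\F_p$ with a normal form $P_n$ using exactly $n$ inversions, and the associated M\"obius transformation $R_n$ is built by the recursion $R_m(x)=a_{m+1}+1/R_{m-1}(x)$ with $R_0(x)=a_1x+a_0$. The only points where $R_n$ can fail to equal $P_n=f$ are those $x$ at which some intermediate denominator $R_{m-1}(x)$ vanishes, that is, the poles $\rho_1,\ldots,\rho_n$ collected in $\mathcal{O}^n$. Since each of the $n$ levels contributes at most one such pole, $|\mathcal{O}^n|\leq n$, and hence $f(c)=R_n(c)$ for every $c\in\F_p\setminus\mathcal{O}^n$, a set of size at least $p-n$. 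This yields the first claim essentially immediately from the construction.

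For the second assertion the key structural input is that $R_n\in\PGL_2(\F_p)$ acts sharply $3$-transitively on $\mathbb{P}^1(\F_p)$, so a M\"obius transformation is uniquely determined by its values at any three distinct points. Thus if one samples three points $c_1,c_2,c_3$ each lying in the good set $G:=\{c\in\F_p : f(c)=R_n(c)\}$, then the observed pairs $(c_i,f(c_i))=(c_i,R_n(c_i))$ determine $R_n$ by interpolation; this is precisely what \emph{solving the correspondent $R_n$} means. I would then estimate the probability: sampling the three points independently and uniformly from $\F_p$, each lands in $G$ with probability $|G|/p\geq (p-n)/p=1-n/p$ by the first part, so by independence all three are good with probability at least $(1-n/p)^3$. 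The hypothesis $n<(1-(1/2)^{1/3})p$ is exactly equivalent to $(1-n/p)^3>1/2$, giving the stated bound.

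The step I expect to require the most care is reconciling the clean bound $(1-n/p)^3$ with the requirement that the three interpolation points be \emph{distinct}: pinning down a M\"obius transformation needs three distinct nodes, whereas independent uniform sampling can repeat a point. Either I sample without replacement, replacing $(1-n/p)^3$ by the slightly smaller quantity $\binom{p-n}{3}/\binom{p}{3}$ and re-checking that the threshold survives, or I absorb the $O(1/p)$ collision probability into the estimate. I would state the sampling model explicitly and observe that, since $|G|\geq p-n$ with $n<0.21p$ makes $G$ a large majority of $\F_p$, distinctness fails only with negligible probability, so $(1-(1/2)^{1/3})p$ remains the operative constraint.
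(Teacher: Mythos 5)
Your proposal is correct and follows essentially the same route as the paper: bound the exceptional set by the $n$ poles $\mathcal{O}^n$ to get at least $p-n$ points where $f=R_n$, then note that three good points determine the M\"obius transformation and that the hypothesis on $n$ makes the good ratio exceed $(1/2)^{1/3}$, so three independent samples all land in the good set with probability greater than $1/2$. Your added care about the distinctness of the three sampled points is a refinement the paper's (very terse) proof silently skips, but it does not change the argument.
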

\begin{proof}
First of all, $|\mathcal{O}^n|\leq n$ since there are $n$
inversions, so there are at least $p-n$ many $c$ satisfying
$f(c)=R_n(c)$. Thus, $p-n\geq p-(1-(1/2)^{1/3})p=(1/2)^{1/3}p$. Hence,
we know the ratio of $c\in\FF_p$ satisfying $f(c)=R_n(c)$ is
$(1/2)^{1/3}$. Hence, the conclusion is directly followed.
\end{proof}
We furthermore have the following canonical isomorphism
\[
\biggl\{\frac{ax+b}{cx+d}\Bigm| a,b,c,d\in \FF_p\biggr\}\cong
\PGL_2(\FF_p)
\]
where the operations are functional composition and matrix
multiplication respectively.

From these two observations, we have the following correspondence
between the form $P_n(x)$ over a finite field $\FF_p$ and matrix
in $\PGL_2(\FF_p)$:
\[
P_n(x)\longleftrightarrow
\begin{bmatrix}
a_{n+1} & 1\\
1& 0\\
\end{bmatrix}
\begin{bmatrix}
a_{n}& 1\\
1 & 0
\end{bmatrix}\cdots
\begin{bmatrix}
a_2& 1\\
1&0
\end{bmatrix}
\begin{bmatrix}
a_1& a_0\\
0 & 1
\end{bmatrix}\bmod{p}.
\]
To find all $a$-linear elements of $P_n(x)$ over $\FF_p$, we need
to solve $P_n(x)=ax$ over $\F_p$. Correspondingly, we will solve an
equation of matrices,
\[
\begin{bmatrix}
a_{n+1} & 1\\
1& 0\\
\end{bmatrix}
\begin{bmatrix}
a_{n}& 1\\
1 & 0
\end{bmatrix}\cdots
\begin{bmatrix}
a_2& 1\\
1&0
\end{bmatrix}
\begin{bmatrix}
a_1& a_0\\
0 & 1
\end{bmatrix}\bmod{p}=
\begin{bmatrix}
a & 0\\
0 & 1
\end{bmatrix}
\]
where we treat all $a_i$ as variables. The solution of the equation
of matrix is a variety $V\in \F_p^{n+2}$ with dimension $\dim(V)\geq
n-2$. Therefore, $V$ is not empty for $n\geq 3$. If $(a_{0},a_1,\ldots,
a_{n+1})$ is on $V$, then the correspondent permutation polynomial
$P_n(x)$ is $ax$ for $x\in \F_p\setminus\mathcal{O}^n$. This is
definitely not a good way to approach this problem for large $n$,
but we can still say something for small $n$ with fixed $a_1$ and
$a_0$. We should remind readers that $a_1=1$ is correspondent to the
weak form~$Q_n$.
\begin{theorem}
For $p \geq 13$, the only permutation polynomial $P_n$ with $n \leq 4$
and $a_1 = \alpha$ that has $p-4$ many $\alpha$-linear points
is the polynomial $\alpha x$.
In particular, we conclude $\wCrk(\Sigma)>4$ if $\Sigma$ permutes at
most $4$ elements.
\end{theorem}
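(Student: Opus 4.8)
The plan is to reduce the statement to a question about the Möbius transformation $R_n$ attached to $P_n$, settle that, and only then transfer the conclusion back to the polynomial $P_n$ itself. First I would record that $P_n(c)=R_n(c)$ for every $c\in\F_p\setminus\mathcal{O}^n$ and that $|\mathcal{O}^n|\le n\le 4$. Hence, among the (at least) $p-4$ values $c$ with $P_n(c)=\alpha c$, all but at most $4$ lie off $\mathcal{O}^n$ and therefore satisfy $R_n(c)=\alpha c$; this leaves at least $(p-4)-4=p-8$ such $c$.

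Next I would clear denominators. Writing $R_n(x)=(Ax+B)/(Cx+D)$ from the matrix product $M$ associated to $P_n$, the equation $R_n(x)=\alpha x$ becomes $\alpha C x^2+(\alpha D-A)x-B\equiv 0$, a polynomial of degree at most $2$. For $p\ge 13$ we have $p-8\ge 5>2$, so this polynomial has more roots than its degree and must vanish identically. Since $a_1=\alpha\ne 0$ (otherwise $R_0$ is constant and $P_n$ is not a bijection), identical vanishing forces $C=0$, $B=0$, and $A=\alpha D$, i.e. $M\sim\operatorname{diag}(\alpha,1)$ in $\PGL_2(\F_p)$ and $R_n=\alpha x$. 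This is the clean half of the argument.

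The main obstacle is upgrading $R_n=\alpha x$ to the genuine identity of functions $P_n=\alpha x$. The difficulty is intrinsic: at a zero $\rho$ of an intermediate $R_m$ the polynomial $x^{p-2}$ sends the relevant value $0\mapsto 0$ rather than to $\infty$, so a priori $P_n$ may differ from $\alpha x$ on all of $\mathcal{O}^n$, and any such $P_n$ would still possess $\ge p-|\mathcal{O}^n|\ge p-4$ many $\alpha$-linear points. To rule this out I would run the recursion $M^{(m)}_{21}=M^{(m-1)}_{11}$ and $M^{(m)}_{22}=a_{m+1}M^{(m-1)}_{12}+M^{(m-1)}_{22}$ for the $2\times 2$ product and check, for each $n\in\{0,1,2,3,4\}$, whether the two off-diagonal conditions defining $M\sim\operatorname{diag}(\alpha,1)$ force the shift coefficient sitting immediately after each intermediate zero of $R_m$ to vanish; when it does, the polynomial iteration re-synchronizes with $R_n$ at that pole (since $0^{p-2}+0=0$ mirrors $a+1/\infty=a$), and one obtains $P_n(\rho)=\alpha\rho$ as well. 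I expect this finite case analysis to be the technical heart, and I expect the decisive subtlety to be governed by whether $-1$ is a quadratic residue: a double inversion $x\mapsto a+1/(a+1/x)$ can fix a pole nontrivially exactly when a square root of $-1$ is available, which is precisely the configuration one must certify cannot occur (or must track carefully) before concluding $P_n=\alpha x$ on all of $\F_p$.

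Finally, for the \emph{in particular} clause I would argue contrapositively. If a nonidentity $\Sigma$ moves at most $4$ points, then it fixes at least $p-4$ points; in the weak form ($a_1=1$, so $\alpha=1$) each fixed point is a $1$-linear point, so $\Sigma$ has at least $p-4$ many $1$-linear points. Were $\wCrk(\Sigma)=m\le 4$, the theorem applied with $\alpha=1$ would give $Q_m=x$, i.e. $\Sigma=\mathrm{id}$, a contradiction; hence $\wCrk(\Sigma)>4$. Note that the validity of this last deduction rests entirely on the function-level upgrade of the previous paragraph, which is why I flag that step as the crux.
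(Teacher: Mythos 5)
Your first step (clearing denominators in $R_n(x)=\alpha x$ and observing that a quadratic with more than two roots over $\F_p$ vanishes identically) is correct and in fact tidier than the paper's appeal to ``$R_4$ is determined by four points''; the count $p-4-|\mathcal{O}^n|\ge p-8>2$ is exactly the right way to force $R_n=\alpha x$ in $\PGL_2(\F_p)$. The contrapositive deduction of the ``in particular'' clause is also fine. The genuine gap is the step you yourself flag as the crux: upgrading the projective identity $R_n=\alpha x$ to the pointwise identity $P_n(x)=\alpha x$ on all of $\F_p$, including on $\mathcal{O}^n$. That step is only announced, not proved --- ``I would run the recursion and check whether the off-diagonal conditions force the shift coefficient after each intermediate pole to vanish; I expect this finite case analysis to be the technical heart'' is a plan, not an argument, and your guess that the decisive issue is whether $-1$ is a square does not correspond to what actually closes the gap. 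As written, the proposal establishes only that the M\"obius transformation attached to $P_n$ is $\alpha x$, which (as you correctly note) still permits $P_n$ to disagree with $\alpha x$ on the at most four points of $\mathcal{O}^n$ while retaining $p-4$ many $\alpha$-linear points.

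The paper closes this gap by a different and simpler mechanism. Writing $R_4=\alpha x$ as an entrywise matrix identity and expanding the product gives an explicit polynomial system in $a_2,\dots,a_5$; combining the first and third equations yields $a_2a_3=0$ (since $\alpha\neq 0$), so $a_2=0$ or $a_3=0$. The key point is that a vanishing intermediate shift coefficient makes two consecutive inversions compose to the genuine identity map on \emph{all} of $\F_p$, because $(x^{p-2})^{p-2}=x$ holds even at $x=0$ (as $0^{p-2}=0$). Hence the nested expression for $P_4$ literally collapses, as a function, to a $P_2$-form with the same associated M\"obius transformation; repeating the matrix computation collapses $P_2$ to $P_0=\alpha x+a_0$, and $R_0=\alpha x$ forces $a_0=0$. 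No pole-tracking or quadratic-residue analysis is needed. To salvage your route you would have to actually carry out the case analysis you describe for each $n\le 4$; the paper's algebraic reduction is the efficient way of doing exactly that.
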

\begin{proof}
If there are $4$ elements not in $\mathcal{O}^4$ on $y=R_4(x)$,
then $R_4(x)$ is completely determined by these points.
Therefore, if we have more than $4+|\mathcal{O}^4|$ $\alpha$-linear
elements, then $R_4(x)=ax$.
Since we assumed $f$ has at least $p-4$ $\alpha$-linear elements,
we have the inequality
\[
p-4\geq4+4\geq 4+|\mathcal{O}^4|,
\]
and we get that $p$ is a prime at greater than $12$.

Thus, our condition implies the following matrix equation
\[
\begin{bmatrix}
a_{5} & 1\\
1& 0\\
\end{bmatrix}
\begin{bmatrix}
a_{4}& 1\\
1 & 0
\end{bmatrix}
\begin{bmatrix}
a_3& 1\\
1&0
\end{bmatrix}
\begin{bmatrix}
a_2& 1\\
1 & 0
\end{bmatrix}
\begin{bmatrix}\alpha & a_0\\
0 & 1
\end{bmatrix}=
\begin{bmatrix}
\alpha & 0\\
0 & 1
\end{bmatrix}
\]
which yields the following system of equations:
\begin{align*}
\alpha a_2a_3a_4a_5+\alpha a_2a_3+\alpha \alpha a_2a_5
+\alpha a_4a_5 & =0\\
a_0a_2a_3a_4a_5+a_0a_2a_3+a_0a_2a_5+a_0a_4a_5+a_3a_4a_5+a_0+a_3+a_5 
& =0\\
a_2a_3a_4+a_4+a_2 & =0\\
a_0a_2a_3a_4+a_3a_4+a_0a_4+a_0a_2 & = 0 
\end{align*}

The first equation and the third equation together imply $\alpha
a_2a_3=0$, which means $a_2=0$ or $a_3=0$. Therefore, the form
$P_4$ with $a_1=\alpha$ will be reduced to the form $P_2$ with
$a_1=\alpha$. We then set up a matrix equation regarding $P_2$
with $a_1=\alpha$, and find it will reduce again to $P_0$ with
$a_1=\alpha$. Hence, the only form $P_n$ with $a_1=\alpha$ and
$n\leq 4$ which has at least $p-4$ many $\alpha$-linear elements is
$\alpha x$.
\end{proof}

This framework can also give us insight with regards to the iteration
of the permutation polynomial $f_{p-2,a}(x)=x^{p-2}+a$. The matrix
correspondent to $f_{p-2,a}$ is
\[
M=
\begin{bmatrix}
a & 1\\
1 & 0
\end{bmatrix},
\]
so the matrix correspondent to the $n$-th iterate of $f_{p-2,a}$,
denote by $f_{p-2,a}^n$, is simply $M^n$. It should be noticed that
\[
M^n=\begin{bmatrix}
F_{n+1}(a)& F_n(a)\\
F_n(a) & F_{n-1}(a)
\end{bmatrix}
\]
where $F_n(a)$ is the $n$-th Fibonacci polynomial. One of the many
identities of Fibonacci polynomials that are going to help us here
is the following
\begin{equation}\label{eq:fibonaccipolynomial}
F_{n+m}=F_{n+1}F_{m}+F_{n}F_{m-1}.
\end{equation}
The other fact which will be used is the closed form of the sequence
$\{F_n(\alpha)\}$ for $\alpha\in\mathbb{N}$ is
\[
F_n=Az_+^{n}+Bz_-^{n}
\]
where we let
\[
z_+=\dfrac{-\alpha+\sqrt{\alpha^2+4}}{2}\quad\text{and}\quad
z_-=\dfrac{-\alpha-\sqrt{\alpha^2+4}}{2},
\]
and let $A=1/(z_+-z_-)$ and $B=-1/(z_+-z_-)$. We say the sequence is
not ramified at a prime $p$ if $\alpha^2+4\neq 0$. If we let $F_n\equiv
0\mod p$, it is equivalent to say
\begin{equation}\label{eq:Fn=0}
\left(\dfrac{z_+}{z_-}\right)^{n}\equiv -\dfrac{B}{A}\equiv 1\pmod{p}.
\end{equation}
The $n$ is the multiplicative order of the element $(z_+/z_1)$ in the finite field $\F_p(\sqrt{\alpha^2+4})$, and so $n$ divides $p^2-1$. One can find more details about the order in \cite{Peng2015}.

\begin{lemma}\label{lemma:fibo}
Let $p$ be an unramified prime for the sequence
$\{F_n(\alpha)\}$, and assume $n,m \in \mathbb{N}$ where $n>m$. If
$F_n(\alpha)F_{m-1}(\alpha)\equiv F_{n-1}(\alpha)F_m(\alpha)\pmod{p}$,
and $n$ is the first integer after $m$ satisfying the equation, then
$n-m$ is the multiplication order of $(z_+/z_-)$ in the ring $\FF_p$
or $\FF_{p^2}$.
\end{lemma}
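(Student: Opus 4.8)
The plan is to reduce the condition $F_n F_{m-1} \equiv F_{n-1} F_m \pmod p$ to a statement about the vanishing of a single Fibonacci polynomial, and then invoke the order characterization already established in equation~\eqref{eq:Fn=0}. The key algebraic observation is that the quantity $F_n F_{m-1} - F_{n-1} F_m$ is, up to sign, a Fibonacci polynomial of index $n-m$: indeed, the identity~\eqref{eq:fibonaccipolynomial} in the form $F_{(n-m)} = F_{n-m+1}F_m + F_{n-m}F_{m-1}$ together with the standard companion identity $F_{k}F_{\ell+1}-F_{k+1}F_{\ell}=(-1)^{\ell}F_{k-\ell}$ shows that $F_n F_{m-1}-F_{n-1}F_m = (-1)^{m-1} F_{n-m}$. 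So first I would derive this closed form, either by directly manipulating~\eqref{eq:fibonaccipolynomial} or, more transparently, by using the matrix picture: since $M^k$ has entries governed by $F_{k+1},F_k,F_{k-1}$ and $\det M = -1$, one has $\det(M^n (M^m)^{-1}) = \det(M^{n-m}) = (-1)^{n-m}$, and expanding $M^{n-m}=M^n (M^{-1})^m$ recovers the cross-term $F_n F_{m-1}-F_{n-1}F_m$ as an entry of $M^{n-m}$, which is exactly $(-1)^{\text{something}} F_{n-m}$.

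Once this is in hand, the hypothesis $F_n F_{m-1} \equiv F_{n-1} F_m \pmod p$ becomes simply $F_{n-m}(\alpha) \equiv 0 \pmod p$. By~\eqref{eq:Fn=0}, since $p$ is unramified (so $\alpha^2+4 \not\equiv 0$ and $z_+ \neq z_-$), the condition $F_k \equiv 0 \pmod p$ is equivalent to $(z_+/z_-)^k \equiv 1 \pmod p$, i.e. $k$ is a multiple of the multiplicative order $r$ of $z_+/z_-$ in $\FF_p$ or $\FF_{p^2}$. Thus $F_{n-m}\equiv 0$ forces $r \mid n-m$. Next I would use the minimality hypothesis: $n$ is the \emph{first} integer exceeding $m$ for which the congruence holds. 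Translating through the closed form, this says $F_{n-m}\equiv 0$ but $F_{j}\not\equiv 0$ for all $1 \le j < n-m$; equivalently $n-m$ is the \emph{smallest} positive index with $F_{n-m}\equiv 0$, which is precisely the smallest positive $k$ with $(z_+/z_-)^k \equiv 1$. That smallest $k$ is by definition the order $r$, so $n-m=r$.

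The main obstacle I anticipate is bookkeeping the sign and index shifts in the Fibonacci identity: the precise form $F_nF_{m-1}-F_{n-1}F_m=\pm F_{n-m}$ (and getting the subscript to be $n-m$ rather than $n-m\pm 1$) requires care, and the companion d'Ocagne-type identity is not literally the one displayed in~\eqref{eq:fibonaccipolynomial}, so I would need to prove it, most cleanly via the determinant of the matrix $M$. A secondary subtlety is confirming that unramifiedness genuinely guarantees $A,B\neq 0$ and $z_+\neq z_-$ so that the closed form and the equivalence~\eqref{eq:Fn=0} apply; this is immediate from $\alpha^2+4\neq 0$, but should be stated. Beyond these points the argument is a direct chain: closed form of the cross-difference, reduction to $F_{n-m}\equiv 0$, equivalence with a power of $z_+/z_-$ being $1$, and identification of the minimal such index with the order.
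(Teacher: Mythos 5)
Your argument is correct and reaches the same pivot as the paper --- the congruence $(z_+/z_-)^{n-m}\equiv 1\pmod p$ followed by the minimality step --- but it gets there by a different decomposition. The paper simply substitutes the closed form $F_k=Az_+^{k}+Bz_-^{k}$ into both sides of $F_nF_{m-1}\equiv F_{n-1}F_m$ and cancels: the difference of the two sides is $AB(z_+-z_-)\bigl(z_+^{n-1}z_-^{m-1}-z_+^{m-1}z_-^{n-1}\bigr)$, and since unramifiedness gives $A,B\neq 0$ and $z_+\neq z_-$ while $z_+z_-=-1$ gives $z_\pm\neq 0$, this vanishes exactly when $(z_+/z_-)^{n-m}\equiv 1$. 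You instead first establish the d'Ocagne-type identity $F_nF_{m-1}-F_{n-1}F_m=(-1)^{m}F_{n-m}$ (your matrix derivation via $M^{n-m}=M^{n}(M^{m})^{-1}$ and $\det M=-1$ is sound, and the sign you were worried about comes out to $(-1)^{m}$, which is harmless since only vanishing mod $p$ matters), reduce the hypothesis to $F_{n-m}(\alpha)\equiv 0$, and then invoke the equivalence \eqref{eq:Fn=0}. The two routes cost about the same; yours buys a reusable identity that makes the periodicity of the condition in $n$ transparent and cleanly separates the algebraic identity from the role of unramifiedness (which you correctly isolate as what guarantees $z_+\neq z_-$ and $A,B\neq 0$), while the paper's direct expansion is shorter but leaves those nonvanishing facts implicit.
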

\begin{proof}
Using the closed form of $F_n(\alpha)$, we have
\[
(Az_+^{n}+Bz_-^{n})(Az_+^{m-1}+Bz_-^{m-1})\equiv(Az_+^{n-1}+Bz_-^{n-1})(Az_+^{m}+Bz_-^{m})\pmod{p}.
\]
The equation then will simplify to
\[
\left(\dfrac{z_+}{z_-}\right)^{n-m}\equiv 1\pmod{p}.
\]
Since we assumed $n$ is the first integer after $m$ satisfying the
equation, $n-m$ should be the multiplication order of $(z_+/z_-)$.
\end{proof}
\begin{theorem}
If $n<p$ is the minimal integer such that  $F_n(\alpha)\equiv 0\mod
p$, then the permutation polynomial $f_{p-2,\alpha}^n$ represents
\[
\biggl(-\frac{F_{n-1}}{F_{n-2}}\; \ldots\; -\frac{F_3}{F_2}\;
-\frac{F_2}{F_1}\; 0\biggr),
\]
Where $F_i$ is evaluated at $\alpha$.
Moreover, we have $n$ dividing $p^2-1$. Conversely, if
$f_{p-2,\alpha}^n$ fixes at least $n+4$ elements in $\FF_p$,
and $2n+4\leq p$, then $F_n(\alpha)\equiv 0\mod p$.
\end{theorem}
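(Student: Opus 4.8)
The plan is to work entirely in the $\PGL_2(\F_p)$ picture set up before the statement. The permutation polynomial $f_{p-2,\alpha}$ agrees on $\F_p\setminus\{0\}$ with the M\"obius transformation $M\colon x\mapsto(\alpha x+1)/x$, whose matrix is $\bigl[\begin{smallmatrix}\alpha&1\\1&0\end{smallmatrix}\bigr]$ and satisfies $M^{n}=\bigl[\begin{smallmatrix}F_{n+1}&F_{n}\\F_{n}&F_{n-1}\end{smallmatrix}\bigr]$. The only discrepancy is that $f_{p-2,\alpha}(0)=\alpha=M(\infty)=M^{2}(0)$ while $M(0)=\infty$; thus on $\mathbb{P}^1(\F_p)=\F_p\cup\{\infty\}$ the permutation $f_{p-2,\alpha}$ is exactly ``$M$ with the point $\infty$ deleted,'' following $M$ but taking one extra $M$-step whenever it would land on $\infty$. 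The first key point is that $F_n\equiv0$ forces $F_{n+1}=\alpha F_n+F_{n-1}\equiv F_{n-1}$, so $M^{n}$ is a scalar matrix, i.e.\ the identity of $\PGL_2(\F_p)$; since $M^{k}$ is scalar exactly when its off-diagonal entry $F_k$ vanishes, minimality makes $n$ the exact order of $M$ in $\PGL_2(\F_p)$. Part~(b) is then immediate from the remark before Lemma~\ref{lemma:fibo}: $n=\mathrm{ord}(M)$ equals the multiplicative order of $z_+/z_-$ in $\F_{p^2}^{\times}$, which divides $p^2-1$.

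For the cycle formula I would compute the $M$-orbit of $0$ directly. From $M^{j}(0)=F_j/F_{j-1}$ and $M(F_j/F_{j-1})=(\alpha F_j+F_{j-1})/F_j=F_{j+1}/F_j$ one gets $0\mapsto\infty\mapsto F_2/F_1\mapsto\cdots\mapsto F_{n-1}/F_{n-2}\mapsto0$, and this orbit has length exactly $n$ because $M^{k}(0)=F_k/F_{k-1}=0$ forces $F_k\equiv0$, impossible for $0<k<n$. Deleting $\infty$ converts this single $M$-cycle of length $n$ into the $f_{p-2,\alpha}$-cycle $C=(0\ \ F_2/F_1\ \cdots\ F_{n-1}/F_{n-2})$ of length $n-1$; these are exactly the points listed (the minus signs in the statement come from the closed-form normalization $F_2=-\alpha$ built from $z_\pm$, whereas the matrix recurrence gives $F_2=\alpha$). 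Because we are in the unramified case $\alpha^2+4\neq0$, $M$ is semisimple, so every $M$-orbit other than its at most two fixed points has length $\mathrm{ord}(M)=n$; hence apart from $C$ every cycle of $f_{p-2,\alpha}$ has length $n$ or $1$. Raising to the $n$-th power annihilates all length-$n$ cycles and fixes the fixed points, while $C^{\,n}=C^{\,n\bmod(n-1)}=C$ since $n\equiv1\pmod{n-1}$; therefore $f_{p-2,\alpha}^{\,n}=C$, as claimed.

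For the converse I would prove the contrapositive. If $F_n\not\equiv0$, then $M^{n}$ is a non-identity element of $\PGL_2(\F_p)$ and so fixes at most two points of $\mathbb{P}^1(\F_p)$. Let $E=\{M^{-i}(\infty):1\le i\le n\}\cap\F_p$ be the set of $x$ whose $f$-trajectory skips $\infty$ within its first $n$ steps; then $|E|\le n$, and for every $x\notin E$ one has $f_{p-2,\alpha}^{\,n}(x)=M^{n}(x)$, so such an $x$ is fixed only if it is one of the $\le2$ fixed points of $M^{n}$. Consequently $f_{p-2,\alpha}^{\,n}$ fixes at most $n+2<n+4$ elements, contradicting the hypothesis; hence $F_n\equiv0$. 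The role of $2n+4\le p$ is to make the two directions match: it guarantees that the $(n-1)$-cycle produced in the forward direction fixes $p-(n-1)\ge n+4$ elements, so that the fixed-point count really does detect $F_n\equiv0$.

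The step I expect to be the main obstacle is making the ``delete $\infty$'' dictionary between the permutation $f_{p-2,\alpha}^{\,n}$ and the M\"obius iterate $M^{n}$ fully rigorous: one must track precisely how each skip over $\infty$ shifts the exponent, which is what underlies both the length-$(n-1)$ identification together with the power computation $C^{\,n}=C$ in the forward direction, and the description of $E$ together with the bound $|E|\le n$ in the converse. A smaller but genuine nuisance is reconciling the two sign conventions for the Fibonacci polynomials---the matrix recurrence $F_{n+1}=\alpha F_n+F_{n-1}$ versus the closed form assembled from $z_\pm$---so that the entries of the cycle appear with exactly the signs written in the statement.
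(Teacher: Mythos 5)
Your argument is correct, but it reaches the cycle formula by a genuinely different route than the paper. The paper never analyzes the cycle structure of $f_{p-2,\alpha}$ itself: it shows that $F_n\equiv 0$ makes $M^n$ scalar, so $R_n=\mathrm{id}$ and $f_{p-2,\alpha}^n$ fixes everything outside the pole set $\mathcal{O}^n$; it then proves the poles are pairwise distinct via Lemma~\ref{lemma:fibo} (the order of $z_+/z_-$) and chases each pole backward to $0$ by the matrix computation $M(-F_{i-1}/F_i)=-F_{i-2}/F_{i-1}$, assembling the cycle from that. You instead set up the ``$M$ on $\mathbb{P}^1(\F_p)$ with $\infty$ deleted'' dictionary, determine the \emph{full} cycle decomposition of $f_{p-2,\alpha}$ (one $(n{-}1)$-cycle through $0$, all other cycles of length $n$ or $1$ by semisimplicity of $M$), and then raise to the $n$-th power; distinctness of the orbit points comes for free from $\mathrm{ord}(M)=n$ rather than from Lemma~\ref{lemma:fibo}. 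This buys you strictly more (a description of $f_{p-2,\alpha}$ itself, not just of its $n$-th iterate, and the observation that $f^n=f^{\,n\bmod(n-1)}$ on the exceptional cycle), at the cost of two points you should make explicit: the unramifiedness $\alpha^2+4\neq 0$ that you invoke is not a hypothesis of the theorem but does follow from $n<p$ (in the ramified case $M$ is unipotent in $\PGL_2(\F_p)$, so the least $n$ with $F_n\equiv 0$ is $p$), and the standard fact that a nonidentity power of a semisimple element of $\PGL_2$ fixes only the (at most two) eigendirections, which is what forces every nontrivial orbit to have length exactly $n$. Your converse is essentially the paper's argument in contrapositive form --- the paper counts at least four $1$-linear non-poles and concludes $R_n=\mathrm{id}$ because a M\"obius map with three fixed points is the identity, while you bound the fixed points of $f^n$ by $|E|+2\leq n+2$; these are the same estimate. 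Your closing remarks correctly identify the two genuine loose ends in the statement itself (the sign convention $F_2=\pm\alpha$ and the bookkeeping of skips over $\infty$), both of which the paper also leaves implicit.
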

\begin{proof}
We have to show $|\mathcal{O}_{(\alpha,\ldots,\alpha)}^n|=n$. It is
equivalent to show that
\[
\dfrac{F_{k+1}}{F_{k}}\neq \dfrac{F_{l+1}}{F_l}
\]
for all $0\leq l\neq  k\leq n$.

By \eqref{eq:fibonaccipolynomial}, we have
\[
F_{n+1}=F_{n}F_2+F_{n-1}F_{1}
\]
Since we assume $F_n(\alpha)=0$,
$F_{n+1}(\alpha)=F_{n-1}(\alpha)$. Therefore, $M^n$ is
equivalent to the identity matrix, which is correspondent to
the identity map. Thus, $R_n(x)=x=f_{p-2,\alpha}^n(x)$ for $x\in
\FF_p\setminus\mathcal{O}_{(\alpha,\ldots,\alpha)}^n$.

We have to show $\#\mathcal{O}_{(\alpha,\ldots,\alpha)}^n=n$. It is
equivalent to show that if
\[
\dfrac{F_{l}}{F_{l+1}}=\dfrac{F_{k}}{F_{k+1}}
\]
for some $0\leq l,k\leq n$, then $l=k$. Without lose of generality,
we can assume $k\geq l$. By Lemma~\ref{lemma:fibo}, we know $k-l$
is the multiplicative order of $z_+/z_-$. However, $n-1$ should
be less than or equal to the order by \eqref{eq:Fn=0} and
our assumption on $n$, so we get $n\leq k-l$. By $0\leq l,k\leq n$,
the consequence has to be $l=k$.

Let $R(x)=R_1(x)$. The pole of $R(x)$ is $0$,
and we have $R_i(x)=R^i(x)$ for all $i$. Given
$c\in\mathcal{O}_{\alpha,\ldots,\alpha}^n $,
$f_{p-2,\alpha}(f^{i}_{p-2,\alpha}(c))=R(R_i(c))=R_{i+1}(c)$ if and
only if $R^{i}_{p-2}(\alpha)(c)\neq 0$. $R(x)$ is equal to $x'/x''$
where $x'$ and $x''$ is given by
\[
\begin{bmatrix}
a & 1\\
1 & 0
\end{bmatrix}
\begin{bmatrix}
x\\
1
\end{bmatrix}
=\begin{bmatrix}
x'\\x''
\end{bmatrix}.
\]

A pole $c$ is $-\unfrac{F_{i-1}}{F_{i}}$ or $R^i$, and the above
computation shows
\[
\begin{bmatrix}
\alpha & 1\\
1 & 0
\end{bmatrix}
\begin{bmatrix}
-F_{i-1}(\alpha)\\
F_i(\alpha)
\end{bmatrix}
=\begin{bmatrix}
-\alpha F_{i-1}(\alpha)+F_i(\alpha)\\
-F_{i-1}(\alpha)
\end{bmatrix}
=\begin{bmatrix}
-\alpha F_{i-1}(\alpha)+\alpha F_{i-1}(\alpha)+ F_{i-2}\\
F_{i-1}
\end{bmatrix}
=\begin{bmatrix}
F_{i-2}\\
-F_{i-1}
\end{bmatrix}.
\]
Thus, we have $f_{p-2,\alpha}^{i-1}(-F_{i-1}/F_i)=0$. It implies that
$f^{n}_{p-2,\alpha}$ represents the permutation
\[
\biggl(-\frac{F_{n-1}}{F_{n-2}}\; \ldots\; -\frac{F_3}{F_2}\;
-\frac{F_2}{F_1}\; 0\biggr).
\]

Conversely, if $f_{p-2,\alpha}^n$ fixes at least $n+4$ elements in
$\F_p$, then it implies at least $4$ elements are $1$-linear and
not a pole of $R_i(x)$ for all $i\leq n$. Therefore, $R_n(x)=x$,
which implies $F_n(\alpha)\equiv 0\pmod{p}$.
\end{proof}

\appendix
\section{Randomness in the $n$-th depth}
\label{appendix}
\hypertarget{appa}{}
\begin{center}
\emph{Appednix by Wayne Peng and Ching-Hua Shih}
\end{center}

While preparing the paper, the authors were asked by Shih-Han Hung  
what we could say
about the permutations which appear in the
$n$-th depth of the inverse tree used in the proof of 
Lemma~\ref{two_cycle_formula}, see Figure \ref{tree}.

We further define the following. On each level of the inverse
tree, we say a permutation is of \emph{the first type} if the last
polynomial used in the composition is the inversion $\s$. Otherwise,
we say the permutation is of \emph{the second type}.

Our belief is that the permutations that appear in the $n$-th level
should be random. If the permutations in the $n$-th level of the
tree do appear randomly, then the probability that two elements,
say $a = 1$ and $b = 2$, appear in the same cycle of a 
randomly selected permutation at that level 
is $\frac12$. If we claim the behavior of permutations of the
first type, $p$, with the correspondent permutation $p\d$ of the
second type, is independent, then the probability that $a$ and $b$
appear in the same cycle for $p$ and $p\d$ together is $\frac{1}{4}$.

Using this fact, we used SageMath \cite{sagemath} to find the frequency of
$b$ in the orbit of $a$ under a permutation. We searched through all primes
from $547$ (the 101-st prime) to $1229$ (the 201-st prime) with depth
from $1$ to $10$. However, it is computationally difficult to go
through all branches on the inverse trees due to the exponential
growth of the trees, so for each prime and each depth, we generated
$500$ random paths to the $n$-th level of the tree and tested
whether $2$ was in the orbit of $1$.

The complete data can be found
\href{https://sites.google.com/site/atowsley42/data}{here}.
We demonstrate
our result by providing histograms for $1$ and $2$ occurring in the same cycle
for polynomials of the first type, the second
type, or for both  types; see Figure~\ref{fig:two}. These histogram support our hypothesis that
the random permutations appear on the $n$-th depth of the inverse tree,
and it also support that $p$ and $p\d$ are independent events.

\begin{figure}
\centering
    \includegraphics[width=45mm, height=45mm]{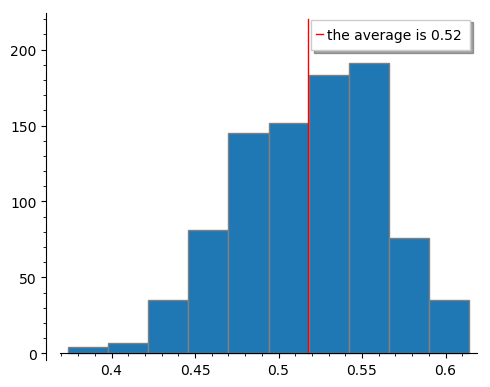}
\hfill
    \includegraphics[width=45mm, height=45mm]{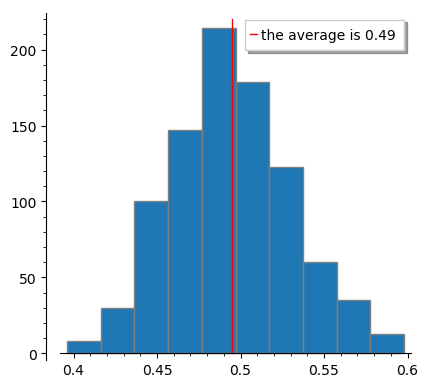}
\hfill
    \includegraphics[width=45mm, height=45mm]{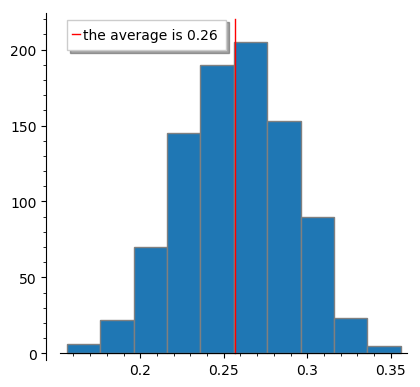}
\caption{Frequency of 2 and 1 in the same cycle for permutations of 
the first type (left), second type (center) and both types (right).}
\label{fig:two}
\end{figure}

We can do a more detailed analysis by applying $p$-test to small
primes and depths. In those cases, we can run through all branches
of the inverse trees by a defined pattern, and get a string of $0$'s
and $1$'s where $0$ means $a=1$ and $b=2$ are not on the same cycle
of the permutation, and $1$ means $a$ and $b$ are on the same
cycle. For a given prime $p$, the branches at depth $d$ can be
indexed by $\{1,2,\cdots,p{-}1\}^d$ with lexicographic order. Let
$(i_1,i_2,\ldots,i_d)$ be an index. The $k$-th coordinate $i_k$
indicates the path, compose with $\sigma^{i_k}$, from the depth
$k-1$ to the depth $k$. The result of the test is shown below,
see Figures~\ref{figone} and~\ref{figtwo}.
The test shows that a string is likely to 
consists only of 0's or only of 1's when the depth becomes large.

\begin{figure}
\begin{tabular}{||c|c||c|c|c||}
\toprule

Prime & Depth & Two sided p-value & Less p-value & Greater p-value\\
\hline
\multirow{4}{*}{3} & 2 & 1 & 0.5 & 0.5\\
                   & 3 & 0.28009 & 0.14004 & 0.85996\\
                   & 4 & 0.78151 & 0.60924 & 0.39076\\
                   & 5 & 0.91628 & 0.54186 & 0.45814\\
\hline
\multirow{4}{*}{5} & 2 & 0.26335 & 0.86833 & 0.13167\\
                   & 3 & 0.45392 & 0.22696 & 0.77304\\
                   & 4 & 0.59871 & 0.70064 & 0.29936\\
                   & 5 & 0.03095 & 0.98452 & 0.01547\\
\hline
\multirow{4}{*}{7} & 2 & 0.16883 & 0.91558 & 0.08442\\
                   & 3 & 0.27459 & 0.86271 & 0.13729\\
                   & 4 & 0.00042 & 0.99979 & 0.00021\\
                   & 5 & 4.42e-05 & 0.99998 & 2.21e-05\\
\hline
\multirow{4}{*}{11} & 2 & 0.68766 & 0.34383 & 0.65617\\
                    & 3 & 0.96205 & 0.48103 & 0.51897\\
                    & 4 & 0.00677 & 0.00339 & 0.99661\\
                    & 5 & 1.13e-05 & 5.65e-06 & 1.00000\\
\hline
\multirow{4}{*}{13} & 2 & 0.73800 & 0.63100 & 0.36700\\
                    & 3 & 0.99996 & 0.50002 & 0.49998\\
                    & 4 & 0.95143 & 0.47572 & 0.52428\\
                    & 5 & 0.56364 & 0.71818 & 0.28182\\
\hline
\multirow{4}{*}{17} & 2 & 0.98750 & 0.50625 & 0.49375\\
                    & 3 & 0.31958 & 0.15979 & 0.84021\\
                    & 4 & 0.66922 & 0.66539 & 0.33461\\
                    & 5 & 0.00416 & 0.00208 & 0.99792\\
\hline
\multirow{4}{*}{19} & 2 & 0.37178 & 0.81411 & 0.18589\\
                    & 3 & 0.60601 & 0.30301 & 0.69699\\
                    & 4 & 0.83950 & 0.58025 & 0.41975\\
                    & 5 & 2.72e-17 & 1.36e-17 & 1\\
\hline
\multirow{4}{*}{23} & 2 & 0.52573 & 0.26286 & 0.73714\\
                    & 3 & 0.93955 & 0.46978 & 0.53022\\
                    & 4 & 0.34180 & 0.82910 & 0.17090\\
                    & 5 & 0.00016 & 7.90e-05 & 0.99992\\
\bottomrule
\end{tabular}

\caption{\label{figone} $p$-values of the for the randonmness of the sequence of zeros and ones created by testing if 2 is in the orbit of 1 modulo $p$ for polynomials of the first type. }
\end{figure}

\begin{figure}
\begin{tabular}{||c|c||c|c|c||}
\toprule

Prime & Depth & Two sided p-value & Less p-value & Greater p-value\\
\hline
\multirow{4}{*}{3} & 2 & 1 & 0.5 & 0.5\\
                   & 3 & 0.28009 & 0.14004 & 0.85996\\
                   & 4 & 0.78151 & 0.60924 & 0.39076\\
                   & 5 & 0.91628 & 0.54186 & 0.45814\\
\hline
\multirow{4}{*}{5} & 2 & 0.05686 & 0.97157 & 0.02843\\
                   & 3 & 0.00060 & 0.99970 & 0.00030\\
                   & 4 & 0.00516 & 0.99742 & 0.00258\\
                   & 5 & 7.67729 & 1.00000 & 3.84e-09\\
\hline
\multirow{4}{*}{7} & 2 & 0.01835 & 0.99083 & 0.00917\\
                   & 3 & 0.00852 & 0.99574 & 0.00426\\
                   & 4 & 7.79e-17 & 1 & 3.90e-17\\
                   & 5 & 1.30e-79 & 1 & 6.50e-80\\
\hline
\multirow{4}{*}{11} & 2 & 0.11040 & 0.94480 & 0.05520\\
                    & 3 & 9.85e-09 & 1.00000 & 4.93e-09\\
                    & 4 & 2.25e-39 & 1 & 1.13e-39\\
                    & 5 & 0 & 1 & 0\\
\hline
\multirow{4}{*}{13} & 2 & 0.44849 & 0.77576 & 0.22424\\
                    & 3 & 2.40e-09 & 1.00000 & 1.20e-09\\
                    & 4 & 1.31e-45 & 1 & 6.54e-46\\
                    & 5 & 0 & 1 & 0\\
\hline
\multirow{4}{*}{17} & 2 & 0.05296 & 0.97351 & 0.0264815215218201\\
                    & 3 & 7.79e-13 & 1.00000 & 3.89725348505465e-13\\
                    & 4 & 1.77e-127 & 1 & 8.85e-128\\
                    & 5 & 0 & 1 & 0\\
\hline
\multirow{4}{*}{19} & 2 & 4.19e-13 & 1.00000 & 2.10e-13\\
                    & 3 & 8.41e-24 & 1 & 4.20e-24\\
                    & 4 & 8.29e-157 & 1 & 4.15e-157\\
                    & 5 & 0 & 1 & 0\\
\hline
\multirow{4}{*}{23} & 2 & 4.34e-09 & 1.00000 & 2.17e-09\\
                    & 3 & 7.59e-28 & 1 & 3.79e-28\\
                    & 4 & 1.71e-256 & 1 & 8.53e-257\\
                    & 5 & 0 & 1 & 0\\
\bottomrule
\end{tabular}
\caption{
\label{figtwo}
$p$-values of the for the randonmness of the sequence of zeros and ones created by testing if 2 is in the orbit of 1 modulo $p$ for polynomials of the second type.}
\end{figure}

\section*{Acknowledgements}
This material is based upon work supported by the National Science
Foundation under Grant No. DMS-1439786 while the author was in
residence at the Institute for Computational and Experimental Research
in Mathematics in Providence, RI, during the Summer@ICERM 2019 program
on Computational Arithmetic Dynamics.

Towsley would like to thank John Doyle, Ben Hutz and Bianca Thompson
for their collaboration organizing Summer@ICERM 2019 where this work was done. 

\bibliographystyle{plain}
\bibliography{sources.bib}
\end{document}